\definecolor{citegreen}{rgb}{0,0.6,0}
\definecolor{refred}{rgb}{0.8,0,0}
\newtheorem{thm}{Theorem}[section]
\newtheorem{lem}[thm]{Lemma}
\newtheorem{prop}[thm]{Proposition}
\theoremstyle{definition}
\newtheorem{defn}[thm]{Definition}
\theoremstyle{remark}
\newtheorem{rem}[thm]{Remark}
\numberwithin{equation}{section}
\def\SSS{{{\mathcal S}}}
\def\F{\mathcal F}
\def\LL{\mathcal L}
\def\UU{\mathcal U}
\def\E{\mathbb E}
\def\R{\mathbb R}
\def\R{{{\mathbb R}}}
\def\NN{\mathbb N}
\def\TTT{\mathcal T}
\newcommand{\oX}{\overline{X}}
\newcommand{\oY}{\overline{Y}}
\newcommand{\omu}{\overline{\mu}}
\newcommand{\W}{\mathcal{W}}
\newcommand{\Norm}[2]{\left\Vert #1 \right\Vert_{#2}}
\renewcommand{\div}{\mathrm{div}}
\begin{document}

\title{Mean field sparse optimal control of systems with additive white noise}

\author{Giacomo Ascione}
\address{Giacomo Ascione\\
Scuola Superiore Meridionale,
Universit\`a di Napoli, Largo San Marcellino 10, 80138 Napoli,
Italy}
\email{giacomo.ascione@unina.it}

\author{Daniele Castorina}
\address{Daniele Castorina\\
Dipartimento di Matematica e Applicazioni,
Universit\`a di Napoli, Via Cintia, Monte S. Angelo 80126 Napoli,
Italy}
\email{daniele.castorina@unina.it}

\author{Francesco Solombrino}
\address{Francesco Solombrino\\
Dipartimento di Matematica e Applicazioni,
Universit\`a di Napoli, Via Cintia, Monte S. Angelo 80126 Napoli,
Italy}
\email{francesco.solombrino@unina.it}

\begin{abstract}  
We analyze the problem of controlling a multi-agent system with additive white noise through parsimonious interventions on a selected subset of the agents (leaders). For such a controlled system with a SDE constraint, we introduce a rigorous limit process towards an infinite dimensional optimal control problem constrained by the coupling of a system of ODE for the leaders with a McKean-Vlasov-type SDE, governing the dynamics of the prototypical follower. The latter is, under some assumptions on the distribution of the initial data, equivalent with a (nonlinear parabolic) PDE-ODE system. The derivation of the limit mean-field optimal control problem is achieved by linking the mean-field limit of the governing equations together with the $\Gamma$-limit of the cost functionals for the finite dimensional problems.
\end{abstract}
\keywords{Mean-field limit, $\Gamma$-limit, optimal control with ODE-SDE constraints}
\subjclass{49J20, 49J55, 60H10}

\maketitle
\tableofcontents

\section{Introduction}\label{intro}
In recent years, multi-agent systems have been widely used to describe several phenomena, such as, for instance, flocking and cell aggregation in biology \cite{hofbauer1998evolutionary,camazine2020self,keller1970initiation,cucker2007emergent}, chemical networks \cite{lim2020quantitative,mozgunov2018review,oelschlager1989derivation}, human interaction in social sciences \cite{during2009boltzmann,toscani2006kinetic}, neuronal spike dynamics \cite{delarue2015particle,flandoli2019mean}, cooperative robots \cite{chuang2007multi,leonard2001virtual,perea2009extension,auletta2022herding} and so on. Such systems are analytically and computationally manageable as soon as the number of involved agents is not too large. However, such a number is usually very large in real life applications.\\
In order to overcome this issue, an effective approach consists in the approximation procedure obtained by sending the number of agents $N$ to $\infty$, in place of considering just a large number of them. This procedure is known in the literature as \textit{mean-field limit} \cite{kac1956foundations,mckean1967propagation,carmona2018probabilistic}. Clearly, the mean-field limit of a system of particles described by interacting Ordinary Differential Equations (ODEs) leads to a Partial Differential Equation (PDE), usually hyperbolic and nonlinear, for the density of particles in a certain region of the space. From this point of view, we are simplifying the problem by reducing a very large system of coupled ODEs to a single PDE. On the other hand, the clear improvement in \textit{manageability} is balanced by the inevitable loss in \textit{accuracy}, since the obtained model is just an approximation. Furthermore, the position and/or the velocity of the particles in their full complexity could easily render impossible their exact study. In this regard, it is useful to include a white noise in the system, reducing its complexity.

Mean-field limits for systems of Stochastic Differential Equations (SDEs) have been widely studied in literature (see, for instance, \cite{sznitman1991topics,meleard1996asymptotic,jabin2017mean} and references therein) and present some different characteristics with respect to the ones that appear in the ODE case. Precisely, the limit PDE is no more hyperbolic, but it gains a diffusive term, thus becoming parabolic. Moreover, its solution represents the probability of finding a generic particle in a region of the space. Analogously to what happens in the ODE case, the dynamics of the \textit{limit particle} is described by a (stochastic) McKean-Vlasov equation and the limit PDE is the associated (nonlinear) Fokker-Planck equation.

In the applications, we are not satisfied by the knowledge of the dynamics, but we are also interested in controlling it. Clearly this can be achieved by introducing some control functions in the ODE system. Such an extension is \textit{practically unmanageable}, also from a purely numerical point of view, if the number of the agents is too large. Again, the mean-field limit approach turns out to be a successful tool to study suitable approximations of such a control. In the setting of the optimal control theory, this is done by combining the mean-field limit with the $\Gamma$-limit tool, as done in \cite{fornasier2014mean}.

The previous result relies on the fact that one is applying the control directly on each agent of the system. This is, however, not possible in different applications, as for instance crowd control in emergency situations. In such a case, some \textit{special agents}, called \textit{herders}, are required to drive \textit{passive agents}, the \textit{herd}, towards a designated confinement. 

The herding problem has been recently developed, see for instance \cite{albi2016invisible,albi2017mean,albi2021mean,burger2020instantaneous,lien2004shepherding,lien2005shepherding,sebastian2021multi,pierson2017controlling,auletta2022herding}. The main features of the model are the following: on the one hand, the control can be applied only on the herders' dynamics; on the other hand, the number of agents in the herd is usually much larger than the number of herders. Once again, we can describe both the herd and the herders via a system of coupled ODEs and imagine that the number $N$ of agents in the herd is sent to $\infty$, while the number $m$ of herders is fixed. At the limit, we obtain a system composed by a single hyperbolic nonlinear PDE coupled with $m$ controlled ODEs. This is done by exploiting both the mean-field limit and the $\Gamma$-limit of the control cost functionals, as done in \cite{Fornasier2014}.

Clearly, such an approach cannot be used directly if we introduce uncertainty in the dynamics of the herd. This problem has been, for instance, considered in \cite{auletta2022herding}, where it is solved numerically for a small herd. Since the dynamics of the herd, as described in \cite[Eq.(3)]{auletta2022herding}, is subject to an additive noise term, one cannot use the results contained in \cite{Fornasier2014} to approximate the system by mean-field approach.\\

Inspired by this, in the present paper we investigate the mean-field approach in the previously described herding problem, where the herd exhibits an additive white noise. Without going into details, we consider the discrete model
\begin{equation}\label{microdiscrintro}
	\begin{cases}
		\displaystyle d\, X^n (t) = \frac{1}{N} \sum_{l=1}^{N} H_1 (X^l(t) - X^n(t)) + \frac{1}{m} \sum_{j=1}^{m} K_1 ( Y^{j} (t) -  X^n (t)) \, dt+ \sqrt{2 \sigma} \, dW^n(t)\\
		\displaystyle \frac{d}{dt} Y^i (t) = \frac{1}{N} \sum_{l=1}^{N} K_2 (Y^i (t)-X^l(t)) + \frac{1}{m} \sum_{j=1}^{m} H_2 ( Y^{j} (t) - Y^i (t)) + u_N^i(X^1(t),..,X^n(t)).\\
	\end{cases}
\end{equation}
where the position of the agents in the herd is described by $X^n$, $n=1,\dots,N$, while the herders' position is described by $Y^i$, $i=1,\dots,m$ and $u_N^i$ is a suitable control function, minimizing a certain cost functional $\mathcal{F}_N$. It is clear that if $\sigma=0$ we go recover the system in \cite{Fornasier2014}. Then we study the problem as $N \to \infty$. Precisely, we show that the empirical measure $\sum_{n=1}^N \delta_{X^n(t)}$ converges in Wasserstein distance to a measure $\omu$ and $Y^i$, $i=1,\dots,m$ converge in expectation to some deterministic functions $\oY^i$, which turn out to solve the PDE/ODE system
\begin{equation}\label{pdeodeintro}
	\begin{cases}
		(\partial_t - \sigma \Delta) \ \omu_t = - \div \left( (H_1 \ast \overline{\mu}_t (\cdot) + \frac{1}{m} \sum_{j=1}^{m} K_1 (\oY^{j} (t) - \cdot))\omu_t \right)\\
		\displaystyle \frac{d}{dt} \overline{Y}^i (t) = K_2 \ast \overline{\mu}_t (\oY^i (t)) + \frac{1}{m} \sum_{j=1}^{m} H_2 (\oY^{j} (t) - \oY^i (t)) + u^i(t,\omu_t). & i=1,..,m,
	\end{cases}
\end{equation}
where $u^i$ minimizes a cost functional $\mathcal{F}$ that is the $\Gamma$-limit of $\mathcal{F}_N$. To prove the mean-field limit, as it is usual in the stochastic setting, one has to prove a \textit{propagation of chaos} result, i.e. the fact that the (initially coupled) positions of the agents in the herd become \textit{independent as $N \to \infty$}. Precisely, $X^n$, $n=1,\dots,j$, converge in expectation to $j$ independent copies of the solution $\oX$ of the system
\begin{equation}\label{microintro}
 	\begin{cases}
 		\displaystyle d\,\overline{X} (t) = H_1 \ast \overline{\mu}_t (\oX(t)) + \frac{1}{m} \sum_{j=1}^{m} K_1 (\oY^{j} (t) - \oX (t)) \, dt+ \sqrt{2 \sigma} \, dW(t)\\
 		\displaystyle \frac{d}{dt} \overline{Y}^i (t) = K_2 \ast \overline{\mu}_t (\oY^i (t)) + \frac{1}{m} \sum_{j=1}^{m} H_2 (\oY^{j} (t) - \oY^i (t)) + u^i(t,\omu_t). & i=1,..,m\\
 		\omu={\rm Law}(\oX), \ \omu_t=({\rm ev}_t)_\sharp \omu.
 	\end{cases}
\end{equation}
The paper is organized as follows. Section \ref{prelim} is devoted to discuss some preliminaries and to set up the notations. In Section \ref{existcon} we first prove the well-posedness of the system \eqref{microintro} and the fact that \eqref{pdeodeintro} is its Fokker-Planck equation. In particular, we prove the equivalence of the well-posedness of \eqref{pdeodeintro} and the one of \eqref{microintro} under suitable assumptions on the initial data. Then, we prove the propagation of chaos result, that leads directly to the mean-field limit. At this point, in Section \ref{optcon} we are able to develop the limit optimal control theory. Precisely, we first prove separately the well-posedness of the discrete optimal control system and of the limit one. Then, we prove the $\Gamma$-limit result on the cost functionals, thus showing that the optimal control in the limit setting is a good approximation of the control in the discrete one for a large herd.\\
Clearly, this is a first step in such a direction and then some restrictive hypotheses on the interaction and the controls have been imposed. In future works we plan to relax such restrictions as well as considering problems with multiplicative white noise, second order problems with white noise and controls in the velocity terms and problems with a different kind of additive noise.

\textbf{Acknowledgements}: G. Ascione has been partially supported by MIUR - PRIN 2017, project Stochastic Models for Complex Systems, No. 2017JFFHSH. D. Castorina has received funding from the research grant ``BIOMASS'' from the University of Naples Federico II - ``Finanziamento della Ricerca di Ateneo (FRA) - Linea B. The work of F. Solombrino is part of the MIUR - PRIN 2017, project Variational Methods for Stationary and Evolution Problems with Singularities and Interfaces, No. 2017BTM7SN.  He also acknowledges support by project Starplus 2020 Unina Linea 1 "New challenges in the variational modeling of continuum mechanics" from the University of Naples Federico II and Compagnia di San Paolo. Finally, all the authors are partially supported by Gruppo Nazionale per l’Analisi Matematica, la Probabilitá e le loro Applicazioni (GNAMPA-INdAM).

\section{Preliminaries and notations}\label{prelim}

In this section we will set up the main assumptions on our model, discuss some preliminaries and lay out our notations and terminology.\\
\subsection{General notation}
For any horizon $T>0$ and any complete metric space $B$, we let $C^0([0,T];B)$ be the space of $B$-valued continuous functions over $[0,T]$. Moreover, for any $t \in [0,T]$ we denote by ${\rm ev}_t$ the evaluation map, \textit{i.e.} the map ${\rm ev}_t (f) = f(t)$ for any $f \in C^0([0,T]; \R^d)$. For any $p \geq 1$, we let $\W_p (B)$ be the $p$-Wasserstein space, \textit{i.e.} the space of Borel probability measures $\mu$ on $B$, such that 
\begin{equation}
M^p_p(\mu, x_0 )=\int_{B} (d(x,x_0))^p \, d\mu < \infty, 
\end{equation} 
where $d$ is the metric on $B$ and $x_0 \in B$ is fixed (see \cite[Definition 6.4]{villani}).  Let $\mu,\nu \in \W_p (B)$; we say that a Borel probability measure $\gamma$ on $B\times B$ is a \textit{coupling} of $\mu$ and $\nu$ if for for any Borel set $A\subset B$ it holds $\gamma(A\times B) = \mu(A)$ and $\gamma(B\times A)=\nu(A)$ (see \cite[Definition 1.1]{villani}). We denote by $\Pi (\mu,\nu)$ the set of all possible couplings. Finally, it is possible to equip $\W_p(B)$ with the Wasserstein metric 
\begin{equation}
\W^p_p(\mu,\nu) = \inf_{\gamma \in \Pi (\mu,\nu)} \int_{B \times B} (d(x,y))^p \, d\gamma, 
\end{equation}
(see \cite[Definition 6.4]{villani}). Let us recall that, by \cite[Theorem 4.1]{villani}, the above infimum is actually achieved and the minimum point is called \textit{optimal coupling}. We also denote by $\sharp$ the pushforward operator. In the following, $B$ will be a Banach space, hence we can set $x_0=0$ and $d(x,y)=\|x-y\|$. We will omit $x_0$ from the above notation.\\

Fix $d \geq 1$ and let $H_i: \R^d \to \R^d$, $K_i : \R^d \to \R^d$, $1=1,2$ be globally Lipschitz continuous funtions. In what follows, without loss of generality we shall suppose that all the involved random variables are supported on a fixed filtered probability space $(\Omega, \F, \F_t, \mathbb{P})$ and we denote by $\E$ the expectation operator. Furthermore, for any metric space $B$ we denote by $\mathcal{M} (\Omega;B)$ the space of $B$-valued random variables and, for any $X \in \mathcal{M} (\Omega; B)$ we define ${\rm Law}(X) = (X)_\sharp \mathbb{P}$. When possible, we will omit the dependence on the probability space. \\
\begin{rem}\label{fuffa} Let us recall some basic properties.
	\begin{itemize}
		\item[i] Let $\nu \in \W_p (C^0([0,T];\R^d))$ and set $\nu_t=({\rm ev}_t)_\sharp \nu$. We point out the following elementary inequality
		\begin{equation*}
			M_p(\nu_t) \leq M_p (\nu)
		\end{equation*}
		for any $t \in [0,T]$. 
		\item[ii] Since $H_j,K_j,j=1,2$ are globally Lipschitz continuous, also their convolution products with $\nu_t$ are globally Lipschitz continuous with the same Lipschitz constants.
		
	\end{itemize}
\end{rem}

We also recall the following \textit{Doob's maximal inequality} in its $L^p$ form for the Brownian motion.

\begin{thm}[\cite{revuzyor}] \label{revuzunz}
	Let $W(t)$ be a $d$-dimensional Brownian motion. For any $p>1$ it holds:
	\begin{equation*}
		\E \left(\max_{0\leq t \leq T} |W(t)|^p \right) \leq \left( \frac{p}{p-1} \right)^{p} \E (|W(T)|^p)
	\end{equation*} 
\end{thm}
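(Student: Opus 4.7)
The plan is to reduce the continuous-time claim to the classical $L^p$ form of Doob's maximal inequality for submartingales, and for this the first step is to identify the correct submartingale. Since $W$ is a continuous $\R^d$-valued martingale and the map $x \mapsto |x|$ is convex, Jensen's inequality for conditional expectations gives that $|W(t)|$ is a non-negative continuous submartingale. Composing once more with the convex function $x\mapsto x^p$ on $[0,\infty)$ (for $p\geq 1$), we see that $|W(t)|^p$ is also a submartingale. Integrability is not an issue here: since $W(T)\sim \mathcal N(0,TI_d)$ is Gaussian, all moments of $|W(T)|$ are finite, and standard Gaussian tail bounds applied to the reflection-principle estimate for $\max_{0\le t\le T}|W(t)|$ show that $\E\bigl(\max_{0\le t\le T}|W(t)|^p\bigr)<\infty$ for every $p\geq 1$, so no truncation/stopping-time argument is needed to justify the ensuing divisions.

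Next, I would invoke Doob's weak maximal inequality for the non-negative submartingale $|W(t)|$: for every $\lambda>0$,
\begin{equation*}
\lambda\, \mathbb{P}\Bigl(\max_{0\le t\le T}|W(t)|\ge \lambda\Bigr) \;\leq\; \E\Bigl(|W(T)|\,\mathbf{1}_{\{\max_{0\le t\le T}|W(t)|\ge\lambda\}}\Bigr).
\end{equation*}
The passage from the discrete-time Doob inequality to the continuous-time one uses the path-continuity of Brownian motion, approximating the maximum over $[0,T]$ by the maximum over a countable dense subset. Once this is available, denote $M^\ast_T:=\max_{0\le t\le T}|W(t)|$ and write the $L^p$-norm via the layer-cake formula
\begin{equation*}
\E\bigl((M^\ast_T)^p\bigr) \;=\; p\int_0^{\infty}\lambda^{p-1}\,\mathbb{P}(M^\ast_T\ge \lambda)\,d\lambda.
\end{equation*}

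Substituting the weak maximal inequality and applying Fubini's theorem to swap the $\lambda$-integral with the expectation yields
\begin{equation*}
\E\bigl((M^\ast_T)^p\bigr) \;\leq\; p\,\E\!\left(|W(T)|\int_0^{M^\ast_T}\lambda^{p-2}\,d\lambda\right) \;=\; \frac{p}{p-1}\,\E\bigl(|W(T)|\,(M^\ast_T)^{p-1}\bigr).
\end{equation*}
Now Hölder's inequality with conjugate exponents $p$ and $p/(p-1)$ gives the bound
\begin{equation*}
\E\bigl(|W(T)|\,(M^\ast_T)^{p-1}\bigr) \;\leq\; \E\bigl(|W(T)|^p\bigr)^{1/p}\,\E\bigl((M^\ast_T)^p\bigr)^{(p-1)/p},
\end{equation*}
and, using the already verified finiteness of $\E((M^\ast_T)^p)$, I would divide by $\E((M^\ast_T)^p)^{(p-1)/p}$ and raise to the $p$-th power to conclude. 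The only genuinely delicate point in this scheme is the a priori $L^p$-finiteness of the running maximum, which however is automatic for Brownian motion; everything else is standard submartingale machinery.
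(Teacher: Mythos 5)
Your proof is correct and is precisely the classical argument for Doob's $L^p$ maximal inequality (submartingale property of $|W(t)|$ via Jensen, the weak maximal inequality, layer-cake plus Fubini, H\"older, and the division justified by the a priori finiteness of the running maximum's moments); the paper itself offers no proof, simply citing Revuz--Yor, where this is exactly the proof given. The only cosmetic remark is that your observation that $|W(t)|^p$ is itself a submartingale is never used, since the argument runs entirely through the weak inequality for $|W(t)|$.
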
    

We will make use of the following general result on the convergence rate of the empirical measures generated by i.i.d. random variables to their law.

\begin{thm}[\cite{fourgui}]\label{rate}
	Let $p>1$ and $\mu \in \W_p(\R^d)$. Let also $X_n$ be a sequence of i.i.d. random variables with distribution $\mu$ and $\mu_N = \frac{1}{N} \sum_{n=1}^N \delta_{X_n}$ be the corresponding empirical measures. Then there exists a constant $C=C(p,d)$ such that
	\begin{equation}
		\E\left(\W_1 (\mu_N, \mu) \right) \leq C M_p (\mu) \begin{cases} N^{-1/2} + N^{-(p-1)/p} &\text{ for } d=1,p \ne 2\\ N^{-1/2}\log(1+N) + N^{-(p-1)/p} &\text{ for } d=2,p \ne 2\\ N^{-1/d} + N^{-(p-1)/p} &\text{ for } d \geq 3,p \ne d/(d-1) \end{cases}
	\end{equation}  
\end{thm}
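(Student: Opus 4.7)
The plan is to invoke the Kantorovich-Rubinstein duality
\[
\W_1(\mu_N, \mu) = \sup_{\mathrm{Lip}(f) \le 1} \int f \, d(\mu_N - \mu),
\]
together with a dyadic decomposition of $\R^d$. Approximating any $1$-Lipschitz $f$ by its piecewise-constant average $f_n$ on the dyadic grid of cubes $\mathcal{Q}_n$ of side $2^{-n}$ and telescoping via $f = f_0 + \sum_{n \ge 1}(f_n - f_{n-1})$ with $\|f_n - f_{n-1}\|_\infty \lesssim 2^{-n}$, one reduces matters to controlling
\[
\E[\W_1(\mu_N, \mu)] \lesssim \sum_{n \ge 0} 2^{-n} \sum_{Q \in \mathcal{Q}_n} \E|\mu_N(Q) - \mu(Q)|.
\]

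The second step I would perform is a spatial truncation: discarding the mass outside a ball $B_R$ costs at most $C M_p(\mu)^p R^{-(p-1)}$ in $\W_1$, by H\"older's inequality together with the Markov bound $\mu(B_R^c) \le R^{-p} M_p(\mu)^p$. Inside $B_R$ there are $\lesssim R^d 2^{nd}$ cubes at scale $n$; since $N \mu_N(Q)$ is $\mathrm{Binomial}(N,\mu(Q))$, the elementary variance estimate gives $\E|\mu_N(Q) - \mu(Q)| \le \sqrt{\mu(Q)/N}$, and Cauchy-Schwarz then yields
\[
\sum_{Q \in \mathcal{Q}_n,\, Q \subset B_R} \sqrt{\mu(Q)/N} \lesssim \sqrt{R^d 2^{nd}/N}.
\]

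The $n$-sum is then cut at some level $n_0$: for $n \le n_0$ the variance estimate is used, while for $n > n_0$ the trivial total-variation bound $\sum_Q |\mu_N(Q) - \mu(Q)| \le 2$ is invoked. The main obstacle is the joint optimisation of the truncation radius $R$ and the cutoff $n_0$: the geometric series $\sum_n 2^{n(d/2-1)}$ converges for $d=1$, grows logarithmically for $d=2$, and diverges for $d \ge 3$, and this trichotomy is precisely what drives the three distinct in-ball rates $N^{-1/2}$, $N^{-1/2}\log(1+N)$ and $N^{-1/d}$. Balancing the in-ball contribution against the tail term $R^{-(p-1)} M_p(\mu)^p$ and optimising $R$ yields the additive $N^{-(p-1)/p}$. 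The excluded critical exponents $p = 2$ (for $d \le 2$) and $p = d/(d-1)$ (for $d \ge 3$) are precisely those where the geometric rate and the tail rate coincide, producing an unavoidable extra logarithmic factor that must be handled by the sharper arguments of the original reference, so I would treat these separately or simply exclude them as the statement does.
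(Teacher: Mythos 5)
The paper offers no proof of this statement: Theorem \ref{rate} is imported verbatim from the cited reference \cite{fourgui} (Fournier--Guillin), so the only meaningful comparison is with the argument of that reference. Your sketch correctly identifies its overall architecture --- multiscale dyadic decomposition, the binomial variance bound $\E|\mu_N(Q)-\mu(Q)|\le\sqrt{\mu(Q)/N}$ per cell, Cauchy--Schwarz over cells, a cutoff scale $n_0$, and a moment-based tail estimate --- and your explanation of how the sign of $d/2-1$ produces the trichotomy of in-ball rates is the right heuristic.

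There is, however, a genuine gap in the truncation step, and it is not a technicality: the scheme as you describe it provably cannot deliver the stated rates. If you truncate once at radius $R$ and apply Cauchy--Schwarz over all $\lesssim R^d2^{nd}$ cells of $B_R$ at scale $n$, the in-ball contribution inherits a factor of $R$ (the cell-count bound gives $R^{d/2}2^{nd/2}N^{-1/2}$ per scale, and the coarse scales with side length comparable to $R$, which you cannot discard since a $1$-Lipschitz test function varies by order $R$ across $B_R$, supply the remaining $R^{1-d/2}$); after optimizing $n_0$ the in-ball term is of order $R\,N^{-1/d}$ for $d\ge 3$, $R\,N^{-1/2}\log N$ for $d=2$ and $R\,N^{-1/2}$ for $d=1$. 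Balancing $R\,N^{-1/d}$ against the tail $M_p(\mu)^pR^{-(p-1)}$ and optimizing $R$ then yields a geometric mean such as $N^{-(p-1)/(pd)}$, not the additive bound $N^{-1/d}+N^{-(p-1)/p}$: for $d=3$, $p=2$ you would obtain $N^{-1/6}$ in place of the claimed $N^{-1/3}$. The missing idea in \cite{fourgui} is to decompose $\R^d$ into dyadic annuli $A_m=B_{2^{m+1}}\setminus B_{2^m}$ and to run the multiscale argument separately on each rescaled annulus, so that the cell count at scale $n$ is $2^{nd}$ independently of $m$ while the mass available to the Cauchy--Schwarz step is only $\mu(A_m)\le 2^{-mp}M_p(\mu)^p$; each annulus then contributes $2^m\min\bigl(\mu(A_m),(\text{in-ball rate})\sqrt{\mu(A_m)}\bigr)$, and summing over $m$ with a split at a critical index is what produces the additive $N^{-(p-1)/p}$ term and the excluded critical exponents. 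A smaller point: to obtain a prefactor \emph{linear} in $M_p(\mu)$ you must first normalize $M_p(\mu)=1$ by rescaling and use the homogeneity $\W_1((\lambda\,\cdot)_\sharp\mu_N,(\lambda\,\cdot)_\sharp\mu)=\lambda\,\W_1(\mu_N,\mu)$; your raw tail bound carries $M_p(\mu)^p$.
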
    
\medskip
\subsection{The model system}
Throughout the paper, we will consider the following SDE-ODE system:

\begin{equation}\label{micro}
\begin{cases}
\displaystyle d\,\overline{X} (t) = H_1 \ast \overline{\mu}_t (\oX(t)) + \frac{1}{m} \sum_{j=1}^{m} K_1 (\oY^{j} (t) - \oX (t)) \, dt+ \sqrt{2 \sigma} \, dW(t)\\
\displaystyle \frac{d}{dt} \overline{Y}^i (t) = K_2 \ast \overline{\mu}_t (\oY^i (t)) + \frac{1}{m} \sum_{j=1}^{m} H_2 (\oY^{j} (t) - \oY^i (t)) + u^i(t,\omu_t). & i=1,..,m\\
\oX(0)=\oX_0, \ \oY^i (0)=\oY^i_0 & i=1,..,m\\
\omu={\rm Law}(\oX), \ \omu_t=({\rm ev}_t)_\sharp \omu
\end{cases}
\end{equation}

where $m \in \NN$ and $\sigma>0$ are fixed constants, $W$ is a $d$-dimensional Brownian motion, $\oX_0 \in L^2 (\mathbb{P})$ is $\F_0$ measurable, $\oY_0^i \in \R^d$, $i=1,..,m$, and the controls $u^i:[0,T]\times \W_1 (\R^d) \to \R^d$, $i=1,..,m$, are Carath{\'e}odory functions which are Lipschitz continuous in the second variable for any fixed $t$ and the Lipschitz contant is integrable. Whenever it is possible to choose a uniform Lipschitz constant, then we will denote it by $L$ and we will assume, without loss of generality, that such $L$ is also common to $H^i$, $K_i$, $i=1,2$.\\

\begin{defn}
We say that $(\oX,\oY^1,..,\oY^m) \in \mathcal{M}(\Omega; C^0([0,T]; \R^d) )\times (C^0 ([0,T]; \R^d))^m$  is a \textit{pathwise} (or \textit{strong}) \textit{solution} of the equation \eqref{micro} with initial data $(\oX_0,\oY_0^1,..,\oY_0^m) \in L^p (\Omega;\R^d) \times (\R^{d})^m$ and Brownian motion $W$ if, setting $\omu={\rm Law}(\oX)$ and $\omu_t=({\rm ev}_t)_\sharp \omu$, the following holds:
\begin{enumerate}
\item almost surely
\begin{equation}
\oX(t) = \oX_0 + \int_0^t \ \, \left( H_1 \ast \overline{\mu}_s (\oX(s)) + \frac{1}{m} \sum_{j=1}^{m} K_1 (\oY^{j} (s) - \oX (s)) \right) \, ds + \sqrt{2 \sigma} \, W(t), 
\end{equation}
for all $t \in [0,T]$;
\item $\oY^i$, $i=1,..,m$, is a Carath{\'e}odory solution of
\begin{equation}
\frac{d}{dt} \overline{Y}^i (t) = K_2 \ast \overline{\mu}_t (\oY^i (t)) + \frac{1}{m} \sum_{j=1}^{m} H_2 (\oY^{j} (t) - \oY^i (t)) + u^i(t,\omu_t), \quad i=1,..,m
\end{equation}
with $\oY^i (0) = \oY_0^i$, $i=1,..,m$.
\end{enumerate}

We say that \eqref{micro} admits a \textit{strong solution} if for any Brownian motion $W$ and any initial data $(\oX_0,\oY_0^1,..,\oY_0^m)$ there exists a pathwise solution $(\oX,\oY^1,..,\oY^m)$.\\
We say that the solution is \textit{pathwise} unique if, whenever we consider any two solutions $(\oX_j,\oY_j^1,..,\oY_j^m)$, $j=1,2$, given the same initial data and Brownian motion, it holds $\oX_1 = \oX_2$ almost surely as well as $(\oY_1^1,..,\oY_1^m) = (\oY_2^1,..,\oY_2^m)$.  \\
If pathwise uniqueness holds we define the solution map $S$ acting on a Brownian motion $W$ and initial data $(\oX_0,\oY_0^1,..,\oY_0^m)$ by asking that $S(W, \oX_0,\oY_0^1,..,\oY_0^m)$ is the unique pathwise solution of \eqref{micro} given the Brownian motion and initial data.
\end{defn}

\section{Existence and convergence results for the controlled system}\label{existcon}

\subsection{Well posedness for the SDE/ODE system}\label{wellpos}

We start by proving the existence of a unique pathwise solution of \eqref{micro}.

\begin{thm}\label{expath}
Equation \eqref{micro} admits strong solutions that are pathwise unique. Moreover, if $\oX_0 \in L^p(\Omega; \R^d)$, then there exists a constant $C=C(p,\oY_0^1,\dots,\oY_0^m,{\rm Law}(\oX_0),L)>0$ such that $M_p(\omu) \le C$.
\end{thm}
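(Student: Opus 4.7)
The plan is a standard McKean--Vlasov fixed-point argument, exploiting the fact that \eqref{micro} decouples nicely once the law $\omu$ is treated as a datum: the ODEs for the $\oY^i$ depend only on $\omu$, and the SDE for $\oX$ depends on $\omu$ and the $\oY^i$ but not conversely. Fix $\nu \in \W_p(C^0([0,T];\R^d))$ and set $\nu_t = ({\rm ev}_t)_\sharp \nu$. By Remark \ref{fuffa}(ii) the maps $x \mapsto H_1 \ast \nu_t(x)$ and $x \mapsto K_2 \ast \nu_t(x)$ are $L$-Lipschitz for every $t$. Plugging $\nu$ in place of $\omu$ in the $\oY^i$ equations gives a Carath\'eodory ODE system with Lipschitz spatial dependence and integrable time dependence, hence admitting a unique solution $(\oY^1_\nu,\dots,\oY^m_\nu)$ on $[0,T]$. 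With these inserted into the drift of the $\oX$ equation, the drift becomes globally Lipschitz in $x$ uniformly in $t$ and progressively measurable, so classical SDE theory yields a unique strong solution $\oX_\nu$. This defines a map $\Phi(\nu) := {\rm Law}(\oX_\nu)$ on $\W_p(C^0([0,T];\R^d))$, and a fixed point of $\Phi$ is exactly a pathwise solution of \eqref{micro}.

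To produce a fixed point I would work with a Bielecki-type weighted metric, replacing the sup norm on $C^0([0,T];\R^d)$ by $\sup_t e^{-\lambda t}\|\cdot(t)\|$ for $\lambda > 0$ to be chosen large, and lifting this to a Wasserstein metric $d_\lambda$ on $\W_p$. By Kantorovich--Rubinstein, $|H_1 \ast \nu_t(x) - H_1 \ast \nu'_t(x)| \le L\,\W_1(\nu_t,\nu'_t) \le L\,\W_p(\nu_t,\nu'_t)$, and analogously for $K_2$; the Lipschitz-in-measure hypothesis on $u^i$ controls the corresponding discrepancy. Taking two copies of \eqref{micro} driven by the same Brownian motion but with candidate measures $\nu,\nu'$, the Brownian terms cancel in the difference; applying Gronwall first to the $\oY^i$ system and then to the integrated form of the $\oX$ SDE, together with Doob's inequality (Theorem \ref{revuzunz}) for the stochastic part, yields $d_\lambda(\Phi(\nu),\Phi(\nu'))^p \le C(\lambda)\, d_\lambda(\nu,\nu')^p$ with $C(\lambda) \to 0$ as $\lambda \to \infty$. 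Banach's contraction theorem then delivers the unique fixed point $\omu$.

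For the $p$-moment estimate, I would exploit the linear-growth bound $|H_1 \ast \omu_t(x)| \le L\|x\| + L\,M_1(\omu_t) + |H_1(0)|$ and its analogues, pass to the integral form of the SDE, raise to the power $p$, apply the supremum on $[0,t]$ and then expectation, and bound the stochastic integral using Doob (Theorem \ref{revuzunz}), which contributes a term of order $\sigma^{p/2}T^{p/2}$. Coupled with the Gronwall estimate on $\max_i \|\oY^i\|$ (which involves $M_1(\omu_s) \le M_p(\omu_s)^{1/p}$) and with the trivial bound $M_p(\omu_s)^p \le \E[\sup_{r \le s}\|\oX(r)\|^p]$, a joint Gronwall argument on the pair $(\E[\sup_{r \le s}\|\oX(r)\|^p], \max_i\|\oY^i(s)\|^p)$ closes the system and produces the asserted $C = C(p,\oY_0^1,\dots,\oY_0^m,{\rm Law}(\oX_0),L)$.

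The main obstacle is the contraction step. Since the Lipschitz constant of $u^i(t,\cdot)$ is only integrable in $t$, not uniformly bounded, a naive Banach argument on $[0,T]$ does not close directly, so one is essentially forced into the Bielecki weighted-norm variant (or an iteration on short intervals); the delicate bookkeeping is to see how the factor $\int_0^T \mathrm{Lip}_{u^i}(t)\,dt$, together with the Lipschitz constants of $H_i, K_i$, interacts with the exponential weight so that $C(\lambda)$ really becomes less than one.
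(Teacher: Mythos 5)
Your proposal follows essentially the same route as the paper: freeze the law $\nu$, solve the decoupled ODEs for the $\oY^i$ and then the frozen-coefficient SDE for $\oX$ (the paper does this last step by an explicit pathwise Banach iteration in the Bielecki norm $\max_t e^{-\gamma t}|\cdot|$ rather than citing classical SDE theory), define $\TTT(\nu)={\rm Law}(\oX_\nu)$, and show it is a contraction for the exponentially weighted Wasserstein metric $\W_{p,\gamma}$ via Kantorovich--Rubinstein and Gr\"onwall, with the $p$-moment bound obtained from the integral form, Doob's inequality and Gr\"onwall. The approach and all the key ingredients coincide, so no further comparison is needed.
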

    
\begin{proof}
The proof is based on fixed point argument and will be divided in several steps, as in \cite{coghi}. To simplify the notation we set $Z_0 = (\oX_0,\oY_0^1,..,\oY_0^m)$.\\

\textbf{Step 1:} For any $T>0$ fix $\nu \in \W_p (C^0 ([0,T]; \R^d))$ and consider the system 

\begin{equation}\label{micro2}
\begin{cases}
\displaystyle d\,\widetilde{X} (t) = H_1 \ast \nu_t (\widetilde{X}(t)) + \frac{1}{m} \sum_{j=1}^{m} K_1 (\widetilde{Y}^{j} (t) - \widetilde{X} (t)) \, dt+ \sqrt{2 \sigma} \, dW(t)\\
\displaystyle \frac{d}{dt} \widetilde{Y}^i (t) = K_2 \ast \nu_t (\widetilde{Y}^i (t)) + \frac{1}{m} \sum_{j=1}^{m} H_2 (\widetilde{Y}^{j} (t) - \widetilde{Y}^i (t)) + u^i(t,\nu_t). & i=1,..,m\\
\widetilde{X}(0)=\oX_0, \ \widetilde{Y}^i (0)=\oY^i_0 & i=1,..,m\\
\nu_t=({\rm ev}_t)_\sharp \nu
\end{cases}
\end{equation}

We prove that \eqref{micro2} admits strong solutions that are pathwise unique and such that $M_p ({\rm Law} (\widetilde{X})) < \infty$. \\

\textbf{Proof of Step 1:} First of all observe that the second equation in \eqref{micro2} is uncoupled with the first one, and then admits global solutions by means of classical ODE theory, recalling that the involved nonlinearities are globally Lipschitz continuous.\\

Thus we now focus on the first equation in \eqref{micro2}. Let us fix $T$ and $\nu \in \W_p (C^0 ([0,T]; \R^d))$. Fix $\omega \in \Omega$ and define the map $\Phi_\omega : C^0 ([0,T]; \R^d) \to C^0 ([0,T]; \R^d)$ as, for any $\eta \in C^0 ([0,T]; \R^d)$, 
\begin{equation}
\Phi_\omega (\eta)(t) := \oX_0(\omega) + \int_0^t \left[ H_1 \ast \nu_s (\eta(s)) + \frac{1}{m} \sum_{j=1}^{m} K_1 (\widetilde{Y}^{j} (s) - \eta(s)) \right] \, ds + \sqrt{2 \sigma} \, W(\omega,t)  
\end{equation}
Consider $\gamma \geq 0$ to be chosen later and define the following norm on $C^0 ([0,T]; \R^d)$
\begin{equation}\label{renorm}
\Vert \eta \Vert_\gamma = \max_{t \in [0,T]} e^{-\gamma t} \, \vert \eta (t) \vert, \quad \text{ for any } \eta \in C^0 ([0,T]; \R^d), 
\end{equation}
which turns out to be equivalent to the uniform norm and defines a Banach space structure on $C^0 ([0,T]; \R^d)$. 
We now prove that $\Phi_\omega$ is a contraction. Observe that for any $t \in [0,T]$ 
we have
\begin{align}
\notag &\vert \Phi_\omega (\eta_1)(t) - \Phi_\omega (\eta_2)(t) \vert \leq\\
&\int_0^t \left[\vphantom{\frac{1}{m} \sum_{j=1}^{m}} \left\vert H_1 \ast \nu_s (\eta_1 (s)) - H_1 \ast \nu_s (\eta_2 (s)) \right\vert \right. \\
&\left. + \frac{1}{m} \sum_{j=1}^{m} \left\vert K_1 (\widetilde{Y}^{j} (s) - \eta_1(s)) - K_1 (\widetilde{Y}^{j} (s) - \eta_2(s)) \right\vert \right]\, ds \leq\\
&2 L \int_0^t \vert \eta_1(s)-\eta_2(s)\vert\,ds \leq \frac{2L}{\gamma} \Vert \eta_1-\eta_2\Vert_\gamma e^{\gamma t}.
\end{align}
Notice that in the estimates above $L$ can be taken independent of $s$ thanks to item (ii) in Remark \ref{fuffa}. Multiplying both sides by $e^{-\gamma t}$ and taking the maximum we get
\begin{equation*}
\Vert \Phi_\omega (\eta_1) - \Phi_\omega (\eta_2) \Vert_\gamma \leq \frac{2L}{\gamma} \Vert \eta_1-\eta_2\Vert_\gamma 
\end{equation*}
Taking $\gamma>2L$ we see that $\Phi_\omega$ is a contraction. Hence it admits a unique fixed point, say $\widetilde{X}(\omega)$ that is achieved as usual by reiteration of the map $\Phi_\omega$.
Let us point out that for any $X \in \mathcal{M} (\Omega; C^0 ([0,T]; \R^d))$ that is $\mathcal{F}_t$-adapted, $\omega \mapsto \Phi_\omega(X(\omega))$ clearly belongs to $\mathcal{M} (\Omega; C^0 ([0,T]; \R^d))$ and is $\mathcal{F}_t$-adapted. Thus $\widetilde{X}\in \mathcal{M} (\Omega; C^0 ([0,T]; \R^d))$ and is $\mathcal{F}_t$-adapted, being the limit of $\mathcal{F}_t$-adapted $C^0 ([0,T]; \R^d)$-valued random variables.\\
We now prove the estimate on the $p$-th moment of $\widetilde{X}$.
Observe that $\widetilde{X}$ satisfies the integral equation
\begin{equation*}
	\widetilde{X}(t)=\oX_0+\int_0^t \left[ H_1 \ast \nu_s (\widetilde{X}(s)) + \frac{1}{m} \sum_{j=1}^{m} K_1 (\widetilde{Y}^{j} (s) - \widetilde{X} (s))\right] \, ds+ \sqrt{2 \sigma} \, W(t).
\end{equation*}
It then follows that
\begin{align*}
	|\widetilde{X}(t)|^p&\le C_p\left[ |\oX_0|^p+t^{1-\frac{1}{p}}\int_0^t \left[|H_1 \ast \nu_s (\widetilde{X}(s))|^p + \frac{1}{m} \sum_{j=1}^{m} |K_1 (\widetilde{Y}^{j} (s) - \widetilde{X} (s))|^p\right] \, ds+|W(t)|^p\right]\\
	&\le C_p\left[\vphantom{\frac{1}{m} \sum_{j=1}^{m}}  |\oX_0|^p+t^{1-\frac{1}{p}}\int_0^t \left[\vphantom{\frac{1}{m} \sum_{j=1}^{m}}L^p |\widetilde{X}(s)|^p+|H_1 \ast \nu_s (0)|^p \right. \right.\\ &\left. \left.+ \frac{1}{m} \sum_{j=1}^{m} (1+|\mathfrak{M}_T|^p+ |\widetilde{X} (s)|^p)\right] \, ds+|W(t)|^p\right]
\end{align*}
where $\mathfrak{M}_T=\max_{i=1,\dots,m}\max_{t \in [0,T]}|\widetilde{Y}^i(t)|$. Notice that
\begin{equation*}
	|H_1 \ast \nu_s (0)|\le \int_{\R^d} |H_1(x)|d\nu_s\le L M_1(\nu) + \vert H_1(0) \vert.
\end{equation*}
We end up by obtaining
\begin{align*}
	|\widetilde{X}(s)|^p& \le C\left[\vphantom{\frac{1}{m} \sum_{j=1}^{m}}  1+|\oX_0|^p+\int_0^s|\widetilde{X}(z)|^p \, dz+|W(t)|^p\right] \\
	&\le C\left[\vphantom{\frac{1}{m} \sum_{j=1}^{m}}  1+|\oX_0|^p+\int_0^t \max_{0 \le w \le z}|\widetilde{X}(w)|^p \, dz+\max_{0 \le s \le t}|W(s)|^p\right],
\end{align*}
for any $s \leq t$, where $C$ depends on $p, \ T, \ L, \ M_1(\nu), \ H_1(0), \ \mathfrak{M}_T$. Taking first the maximum on the left-hand side and then the expected values we conclude Step $1$ by Theorem \ref{revuzunz} and Gr\"onwall's inequality.

\textbf{Step 2:} Define $\SSS$ that maps $(\nu,W,Z_0)$ into the pathwise solution of \eqref{micro2}, with components $\SSS^i(\nu,W,Z_0)$, $i=1,..,m+1$, and 
\begin{equation}\label{titti}
\TTT(\cdot,W,Z_0) = {\rm Law} (\SSS^1 (\cdot, W,Z_0)).
\end{equation}
We will show that the map $\TTT$ has a unique fixed point in $\W_p (C^0([0,T];\R^d))$.\\

\textbf{Proof of Step 2:}
Again we endow $C^0([0,T];\R^d)$ with the norm \eqref{renorm} and we observe that $\W_p (C^0([0,T];\R^d), \Vert \cdot \Vert_\gamma)$ remains unchanged with an equivalent metric denoted by $\W_{p,\gamma}$. \\

For fixed $W$ and $Z_0$ and two given elements $\mu_1,\mu_2 \in \W_p$, omitting the dependence on $W$ and $Z_0$ we have

\begin{equation}\label{wasstre}
\W_{p,\gamma}^p (\TTT(\mu_1),\TTT(\mu_2)) \leq \E \left( \Norm{\widetilde{X}_1 - \widetilde{X}_2}{\gamma}^p \right)
\end{equation}

where $\widetilde{X}_i = \SSS^1 (\mu_i, W. Z_0)$, $i=1,2$. Denoting $\widetilde{Y}_i^j=\SSS^{j+1} (\mu_i, W,Z_0)$, $i=1,2$, $j=1,..,m$ we have

\begin{align}\label{icsilon}
&\vert \widetilde{X}_1(t) - \widetilde{X}_2(t) \vert \leq\\
&\int_0^t \left[\vphantom{\frac{1}{m} \sum_{j=1}^{m}} \left\vert H_1 \ast (\mu_1)_s (\widetilde{X}_1 (s)) - H_1 \ast (\mu_2)_s (\widetilde{X}_2 (s)) \right\vert \right. \\
&\left. + \frac{1}{m} \sum_{j=1}^{m} \left\vert K_1 (\widetilde{Y}_1^{j} (s) - \widetilde{X}_1) - K_1 (\widetilde{Y}_2^{j} (s) - \widetilde{X}_2) \right\vert \right]\, ds.\\
&\leq 2L \int_0^t \left[\vert \widetilde{X}_1 (s) - \widetilde{X}_2 (s) \vert  + \max_{j=1,..,m} \vert \widetilde{Y}_1^j (s) - \widetilde{Y}_2^j (s) \vert \right] \, ds\\ 
&+ \int_0^t \left\vert H_1 \ast (\mu_1-\mu_2)_s (\widetilde{X}_1 (s)) \right\vert \, ds\,.
\end{align}

We now prove the estimate for $Y^j$:

\begin{align}
\label{ypsilon} &\max_{j=1,..,m} \vert \widetilde{Y}_1^j (t) - \widetilde{Y}_2^j (t) \vert \leq\\
&\int_0^t \max_{j=1,..,m} \left[\vphantom{\frac{1}{m} \sum_{j=1}^{m}} \left\vert K_2 \ast (\mu_1)_s (\widetilde{Y}_1^j (s)) - K_2 \ast (\mu_2)_s (\widetilde{Y}_2^j (s)) \right\vert \right. \\
&\left. + \frac{1}{m} \sum_{l=1}^{m} \left\vert H_2 (\widetilde{Y}_1^{l} (s) - \widetilde{Y}_1^j (s)) - H_2 (\widetilde{Y}_2^{l} (s) - \widetilde{Y}^j_2 (s)) \right\vert+|u^j(s,(\mu_1)_s)-u^j(s,(\mu_2)_s)| \right]\, ds\\
&\leq 3L \int_0^t \max_{j=1,..,m} \vert \widetilde{Y}_1^j (s) - \widetilde{Y}_2^j (s) \vert  \, ds\\
&+\max_{j=1,..,m} \int_0^t \left\vert K_2 \ast (\mu_1-\mu_2)_s (\widetilde{Y}_1^j (s)) \right\vert \, ds\, + L\int_0^t \W_1((\mu_1)_s,(\mu_2)_s)\, ds \, .
\end{align}

We now point out the following estimate 
\begin{equation}\label{wasszero}
\left(\int_0^t \W_1 ((\mu_1)_s,(\mu_2)_s) \, ds\right)^p \leq  \frac{T^{1-\frac1p}}{p \gamma} \ \W_{p,\gamma}^p (\mu_1,\mu_2)\ e^{p \gamma t}.
\end{equation}
Indeed, consider the optimal coupling $\pi$ of $\mu_1,\mu_2$ for the $\W_{p,\gamma}$ distance and define $\pi_t=({\rm ev}_t)_{\sharp}\pi$. Then $\pi_t$ is a coupling of $(\mu_1)_t$ and $(\mu_2)_t$ and, by definition of $\W_1$, we get

\begin{align*}
	\left(\int_0^t \W_1 ((\mu_1)_s,(\mu_2)_s) \, ds\right)^p&\le T^{1-\frac{1}{p}}\int_0^t \left(\int_{(\R^d)^2}|x-y| \, d\pi_s(x,y)\right)^p \, ds\\
	&\le T^{1-\frac{1}{p}}\int_0^t \int_{(\R^d)^2}|x-y|^p \, d\pi_s(x,y) \, ds \\
	&\le T^{1-\frac{1}{p}}\int_0^t \int_{(C^0([0,T]; \R^d))^2}{\rm ev}_s(|\varphi_1-\varphi_2|^p) \, d\pi(\varphi_1,\varphi_2) \, ds\\
	&\le T^{1-\frac{1}{p}}\int_0^t e^{p \gamma s}\int_{(C^0([0,T]; \R^d))^2}\Norm{\varphi_1-\varphi_2}{\gamma}^{p} \, d\pi(\varphi_1,\varphi_2) \, ds\\
	&\le \frac{T^{1-\frac1p}}{p \gamma} \ \W_{p,\gamma}^p (\mu_1,\mu_2)\ e^{p \gamma t}.
\end{align*}
In particular, \eqref{wasszero} implies that 

\begin{align} 
&\max\left\{\left(\int_0^t \left\vert H_1 \ast (\mu_1-\mu_2)_s (\widetilde{X}_1 (s)) \right\vert \, ds\right)^p, \left(\int_0^t \left\vert K_2 \ast (\mu_1-\mu_2)_s (\widetilde{Y}_1^j (s)) \right\vert \, ds\right)^p \right\}\notag \\& \qquad \qquad \qquad \qquad \qquad \qquad \qquad \qquad \qquad \qquad \leq  \frac{L^p}{p \gamma} T^{1-\frac1p} \ \W_{p,\gamma}^p (\mu_1,\mu_2)\ e^{p \gamma t} \label{wassuno}
\end{align}

Using \eqref{ypsilon}, \eqref{wasszero} and  \eqref{wassuno}, setting 
$$G_1(t) = \max_{j=1,..,m} \vert \widetilde{Y}_1^j (t) - \widetilde{Y}_2^j (t) \vert^p$$ 

we get that

\begin{equation*}
G_1(t) \leq 3^{p-1} L^p T^{1-\frac1p} \left[3^p \int_0^t G_1(s) \, ds + \frac{2}{p \gamma} \ \W_{p,\gamma}^p (\mu_1,\mu_2)\ e^{p \gamma t}\right].
\end{equation*}

We can now apply Gr\"onwall inequality in order to obtain

\begin{equation}\label{gr1}
G_1(t) \leq \frac{C(p,L,T)}{\gamma}  \ \W_{p,\gamma}^p (\mu_1,\mu_2)\ e^{p \gamma t} .
\end{equation}


By \eqref{icsilon}, \eqref{gr1} and the first inequality in \eqref{wassuno} we have, for any $s \leq t$

\begin{align}\label{icsilon2}
&\vert \widetilde{X}_1(s) - \widetilde{X}_2(s) \vert^p \leq\\
\leq 3^{p-1} T^{1-\frac1p} &\left( (2L)^p  \int_0^s e^{p \gamma z} e^{-p \gamma z} \vert \widetilde{X}_1 (z) - \widetilde{X}_2 (z) \vert^p \, dz \right.\\  
&\left. + \W_{p,\gamma}^p (\mu_1,\mu_2) \frac{e^{p \gamma s}}{p \gamma} \left( \frac{C(p,L,T)}{\gamma} + L^p \right) \right)\\
\leq 3^{p-1} T^{1-\frac1p} e^{p \gamma s} &\left( (2L)^p  \int_0^t \max_{0\leq w \leq z} e^{-p \gamma w} \vert \widetilde{X}_1 (w) - \widetilde{X}_2 (w) \vert^p \, dz\right.\\ 
&\left. +  \frac{\W_{p,\gamma}^p (\mu_1,\mu_2)}{p \gamma} \left( \frac{C(p,L,T)}{\gamma} + L^p \right) \right)
\end{align}

Multiplying by $e^{-p \gamma s}$, taking the maximum in $[0,t]$ on the LHS of the above inequality and applying the expectation to both sides, setting 
$$G_2(t) = \E \left( \max_{0\leq s\leq t} e^{-p \gamma s} \ \vert \widetilde{X}_1 (s) - \widetilde{X}_2 (s) \vert^p \right)$$

we get 
  
\begin{align}\label{icsilon3}
G_2(t) \leq 3^{p-1} T^{1-\frac1p} \left( (2L)^p  \int_0^t G_2(s) \, ds  +  \frac{\W_{p,\gamma}^p (\mu_1,\mu_2)}{p \gamma} \left( \frac{C(p,L,T)}{\gamma} + L^p \right) \right).
\end{align}

We apply Gr\"onwall inequality once again and, assuming $\gamma \geq 1$, we obtain

\begin{equation}\label{gr2}
G_2(t) \leq \frac{C(p,L,T)}{\gamma}  \ \W_{p,\gamma}^p (\mu_1,\mu_2).
\end{equation}

Now we plug the previous inequality in \eqref{wasstre} in order to have

\begin{equation}\label{gr3}
\W_{p,\gamma}^p (\TTT(\mu_1),\TTT(\mu_2)) \leq \frac{C(p,L,T)}{\gamma}  \ \W_{p,\gamma}^p (\mu_1,\mu_2).
\end{equation}

Hence choosing $\gamma$ sufficiently large we get that the map $\TTT$ is a contraction.\\

\textbf{Step 3:} Given $\omu$ the fixed point of Step 2 we construct the solution of equation \eqref{micro} as $\SSS(\omu,W,Z_0)$. The estimate $M_p (\omu) < \infty$ follows directly from Step 1. This concludes the proof.
\end{proof}

\begin{rem}
We point out that it is possible to prove the previous result, as well as the following one, under the milder assumption that the Lipschitz constant of $u^i(t,\cdot)$ depends on $t$, \textit{i.e.} $L=L(t)$, with $L \in L^1(0,T)$.  
\end{rem}

In the sequel we will need stability of the solutions to \eqref{micro} with respect to the controls. We will make an additional structural assumption on the $u^i$'s, namely
\begin{equation}\label{sepvar}
u^i(t,\mu)=h^i(t)g^i(\mu)
\end{equation}
where $h^i \in L^\infty([0,T];\R^{d \times \ell})$ for some $\ell \ge 1$ and $g^i:\W_1(\R^d) \to \R^{\ell}$ is Lipschitz. The product in \eqref{sepvar} is the matrix-vector product.

\begin{thm}\label{thm:stability}
Assume that $h^i_j \rightharpoonup h^i$ in $L^1([0,T];\R^{d \times \ell})$ and $g^i_j \to g^i$ in $C(\W_1(\R^d);\R^{\ell})$. Suppose further that $g^i_j$ are $L$-Lipschitz and $M$-bounded while the $h^i_j$ are $M$-bounded. Define $u^i$ and $u^i_j$ as in \eqref{sepvar} and, accordingly, take $\omu_j$, $\omu$, $\oY^i_j$ and $\oY^i$ as in \eqref{micro} for a fixed initial datum $Z_0=(\oX_0,\oY_0^1,\dots,\oY_0^m)$. Then it holds
\begin{equation}
\lim_{j \to \infty}\max_{0 \le t \le T}(\W_1((\omu_j)_t,\omu_t)+\max_{i=1,\dots,m}|\oY^i_j(t)-\oY^i(t)|)=0. 
\end{equation} 
\end{thm}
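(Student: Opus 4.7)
The plan is to couple the solutions $(\oX_j, \oY^1_j, \dots, \oY^m_j)$ and $(\oX, \oY^1, \dots, \oY^m)$ on the same probability space, using the common Brownian motion $W$ and the fixed initial datum $Z_0$, and then run a Gronwall argument on the deterministic error quantities
\[
\alpha_j(t) := \max_{i=1,\dots,m} |\oY^i_j(t) - \oY^i(t)|, \qquad \beta_j(t) := \E|\oX_j(t) - \oX(t)|.
\]
Coupling via the same $W$ is essential because the stochastic integral then cancels when subtracting the two copies of the SDE for the prototypical follower, leaving a purely pathwise integral inequality for $\beta_j$.

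The first step is to collect uniform a priori bounds. Since the controls $u^i_j = h^i_j g^i_j$ are bounded by $M^2$ uniformly in $j$, inspection of Step 1 in the proof of Theorem \ref{expath} yields $M_p(\omu_j) \le C$ with $C$ independent of $j$ (taking $\oX_0 \in L^p$ with some $p>1$, e.g. $p=2$). Together with Remark \ref{fuffa}(i) and the standard criterion for relative compactness in $\W_1$ (tightness plus uniform $p$-integrability with $p>1$), this implies that the family
\[
\{(\omu_j)_s : j \in \NN,\ s \in [0,T]\} \cup \{\omu_s : s \in [0,T]\}
\]
has compact closure $\mathcal{K}$ in $\W_1(\R^d)$.

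Next, I would decompose the control difference as
\[
u^i_j(s,(\omu_j)_s) - u^i(s,\omu_s) = h^i_j(s)\bigl[g^i_j((\omu_j)_s) - g^i(\omu_s)\bigr] + [h^i_j(s) - h^i(s)]\, g^i(\omu_s).
\]
The first summand is bounded in norm by $M\bigl[L\,\W_1((\omu_j)_s,\omu_s) + \Vert g^i_j - g^i \Vert_{C(\mathcal{K};\R^\ell)}\bigr]$, using the uniform Lipschitz bound on $g^i_j$ and the assumed uniform convergence $g^i_j \to g^i$ on $\mathcal{K}$. For the second summand, the natural object is
\[
\Theta^i_j(t) := \int_0^t [h^i_j(s) - h^i(s)]\, g^i(\omu_s)\, ds.
\]
Since $s \mapsto g^i(\omu_s)$ is bounded, the weak convergence $h^i_j \rightharpoonup h^i$ in $L^1$ gives $\Theta^i_j(t) \to 0$ for every fixed $t \in [0,T]$; moreover $|\Theta^i_j(t) - \Theta^i_j(r)| \le 2M^2 |t-r|$, so $\{\Theta^i_j\}_j$ is equibounded and equicontinuous, and Ascoli-Arzel\`a promotes the pointwise convergence to uniform convergence on $[0,T]$. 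All these contributions can then be packaged into a single scalar $\eps_j \to 0$.

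Finally, I would exploit the Lipschitz bounds on $H_i, K_i$ together with the elementary coupling inequality $\W_1((\omu_j)_s,\omu_s) \le \beta_j(s)$ to obtain an estimate of the form
\[
\alpha_j(t) + \beta_j(t) \le C \int_0^t \bigl(\alpha_j(s) + \beta_j(s)\bigr) ds + \eps_j,
\]
after which Gronwall yields $\max_{0 \le t \le T}(\alpha_j(t) + \beta_j(t)) \to 0$; the $\W_1$ part of the conclusion then follows from $\W_1((\omu_j)_t, \omu_t) \le \beta_j(t)$. The main obstacle, and the only non-routine step, is the merely weak convergence of the $h^i_j$ in $L^1$: it cannot enter a Gronwall loop directly, and the Ascoli-Arzel\`a upgrade of $\Theta^i_j(t)\to 0$ from pointwise to uniform-in-$t$ convergence, resting on the uniform $L^\infty$ bound $M$, is the key device that funnels the weak convergence into a plain scalar error $\eps_j$.
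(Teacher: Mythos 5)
Your proposal is correct and follows essentially the same route as the paper: couple the two solutions through the same Brownian motion so the noise cancels, split the control difference into a Lipschitz-in-$\W_1$ part, a $\Vert g^i_j-g^i\Vert_\infty$ part, and the weakly convergent term $\int_0^t(h^i_j-h^i)g^i(\omu_s)\,ds$, upgrade the latter's pointwise decay to uniform decay via equi-Lipschitzness and Ascoli--Arzel\`a, and close with Gr\"onwall using $\W_1((\omu_j)_s,\omu_s)\le\E|\oX_j(s)-\oX(s)|$. The only differences are cosmetic: you run a single Gr\"onwall loop on $\alpha_j+\beta_j$ where the paper nests two, and your compactness digression for the family of measures is unnecessary given the assumed global uniform convergence of $g^i_j$ (though it anticipates the localized variant used later in the paper).
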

\begin{proof}
Fix a Brownian motion $W$ and let $(\oX_j,\oY^1_j,\dots,\oY^m_j)=\SSS(\omu_j,W,Z_0)$ and \linebreak $(\oX,\oY^1,\dots,\oY^m)=\SSS(\omu,W,Z_0)$. Then we have
\begin{equation}\label{Fine}
\max_{0 \le t \le T}\W_1((\omu_j)_t,\omu_t)\le \max_{0 \le t \le T}\E[|\oX_j(t)-\oX(t)|]\le \E\left[\max_{0 \le t \le T}|\oX_j(t)-\oX(t)|\right].
\end{equation}
Arguing as in \eqref{ypsilon} and using \eqref{sepvar} we get, for all $j \in \NN$ and for all $t \in [0,T]$
\begin{align}\label{ypsilon2}
\max_{i=1,\dots,m}|\oY^i_j(t)-\oY^i(t)| &\le 3L\int_0^t \max_{i=1,\dots,m}|\oY^i_j(s)-\oY^i(s)|ds\\
&+L(1+2M)\int_0^t \W_1((\omu_j)_s,\omu_s)ds+\max_{t \in [0,T]}R_j(t),
\end{align}
where the remainder term is given by
\begin{equation}
R_j(t)=\max_{i=1,\dots,m}\left| \int_0^T (h^i_j(s)-h^i(s))g^i(\omu_s)\chi_{[0,t]}(s)ds\right|+MT\max_{i=1,\dots,m} \| g_j^i - g^i \|_{\infty}.
\end{equation}
Observe that, by weak convergence of $h^i_j$ and uniform convergence of the $g_j^i$, $R_j(t)$ converges pointwise to zero. By $M$-boundedness of $h^i_j$ we also have that $R_j$ are equi-Lipschitz hence
\begin{equation}\label{remainder}
\lim_{j \to \infty}\max_{t \in [0,T]}R_j(t)=0.
\end{equation}
Furthermore, by Gr\"onwall inequality on \eqref{ypsilon2} we get
\begin{equation}\label{Fine2}
\max_{i=1,\dots,m}|\oY^i_j(t)-\oY^i(t)|\le \left(L(1+2M)\int_0^t \W_1((\omu_j)_s,\omu_s)ds+\max_{t \in [0,T]}R_j(t)\right)e^{3LT}
\end{equation}
Arguing as in \eqref{icsilon} we have, for all $0 \le s \le t \le T$
\begin{align}
|\oX_j(s)-\oX(s)|&\le 2L \int_0^t \left(|\oX_j(z)-\oX(z)|+\max_{i=1,\dots,m}|\oY^i_j(z)-\oY^i(z)|\right)dz\\
&\quad +L \int_0^t \W_1((\omu_j)_z,\omu_z)dz
\end{align}
Inserting \eqref{ypsilon2} in the previous inequality we obtain
\begin{align}
|\oX_j(s)-\oX(s)|&\le 2L \int_0^t |\oX_j(z)-\oX(z)|dz+L \int_0^t \W_1((\omu_j)_z,\omu_z)dz\\
&+L(1+2M)e^{3LT}\int_0^t\int_0^z\W_1((\omu_j)_w,\omu_w)\, dw \, dz\\
&+Te^{3LT}\max_{t \in [0,T]}R_j(t)
\end{align}
Seting $G_j(t)=\E[\max_{0 \le s \le t}|\oX_j(s)-\oX(s)|]$ and observing that for any $0 \le w \le z$ it holds
\begin{equation}
\W_1((\omu_j)_w,\omu_w)\le \E[|\oX_j(w)-\oX(w)|]\le G_j(z),
\end{equation}
we achieve
\begin{equation}
G_j(t)\le L(3+(1+2M)Te^{3LT}) \int_0^t G_j(z)dz+Te^{3LT}\max_{t \in [0,T]}R_j(t).
\end{equation}
Taking into account \eqref{remainder} and \eqref{Fine}, an application of the Gr\"onwall inequality then gives $$\lim_{j \to \infty}\max_{0 \le t \le T}\W_1((\omu_j)_t,\omu_t)=0.$$
Inserting this into \eqref{Fine2} we conclude the proof.
\end{proof}
\begin{rem}
Notice that we actually proved
\begin{equation*}
\lim_{j \to \infty}\E\left[\max_{0 \le t \le T}|\oX_j(t)-\oX(t)|\right]=0,
\end{equation*}
that implies
\begin{equation*}
\lim_{j \to \infty}\W_1(\omu_j,\omu)=0.
\end{equation*}
\end{rem}

\subsection{Equivalence with a PDE/ODE system}\label{equival}

We now derive the PDE-ODE system satisfied by the $(m+1)$-ple $(\omu_t, \oY^1(t),..,\oY^m (t))$ as obtained above by solving \eqref{micro}.

\begin{prop}\label{sysder}
Let $(\oX (t), \oY^1(t),..,\oY^m (t))$ be the unique solution of \eqref{micro} with given initial data $(\oX_0, \oY_0^1,..,\oY_0^m)$. For $\omu_t$ as in \eqref{micro},
the $(m+1)$-ple $(\omu (t), \oY^1(t),..,\oY^m (t))$ solves the PDE-ODE system 
\begin{equation}\label{limit}
\begin{cases}
(\partial_t - \sigma \Delta) \ \omu_t = - \div \left( (H_1 \ast \overline{\mu}_t (\cdot) + \frac{1}{m} \sum_{j=1}^{m} K_1 (\oY^{j} (t) - \cdot))\omu_t \right)\\
\displaystyle \frac{d}{dt} \overline{Y}^i (t) = K_2 \ast \overline{\mu}_t (\oY^i (t)) + \frac{1}{m} \sum_{j=1}^{m} H_2 (\oY^{j} (t) - \oY^i (t)) + u^i(t,\omu_t). & i=1,..,m\\
\omu_0= \mathrm{Law}(\oX_0), \ \oY^i (0)=\oY^i_0 & i=1,..,m
\end{cases}
\end{equation}
\end{prop}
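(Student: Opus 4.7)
The ODE part of system \eqref{limit} is nothing but the second equation of \eqref{micro}, so the content of the statement is that $\omu_t = ({\rm ev}_t)_\sharp \omu$ satisfies the Fokker-Planck equation in \eqref{limit} in the distributional sense. The plan is to derive this by applying It\^o's formula to $\varphi(\oX(t))$ for a generic test function $\varphi \in C^2_c(\R^d)$ and then taking expectations, so that $\E[\varphi(\oX(t))] = \int_{\R^d} \varphi \, d\omu_t$.

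First, since by Theorem \ref{expath} the process $\oX$ has the semimartingale decomposition given by the first equation of \eqref{micro}, I can apply It\^o's formula in its standard form to obtain, for every $t \in [0,T]$,
\begin{align}
\varphi(\oX(t)) &= \varphi(\oX_0) + \int_0^t \nabla\varphi(\oX(s)) \cdot \Bigl[ H_1 \ast \omu_s(\oX(s)) + \tfrac{1}{m}\sum_{j=1}^m K_1(\oY^j(s)-\oX(s)) \Bigr] ds \\
&\quad + \sigma \int_0^t \Delta \varphi(\oX(s))\, ds + \sqrt{2\sigma}\int_0^t \nabla\varphi(\oX(s))\cdot dW(s).
\end{align}
The last integral is a true martingale because $\nabla\varphi$ has compact support, hence it vanishes in expectation. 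Taking expectations of the above identity and using Fubini (justified by boundedness of $\nabla\varphi$, $\Delta\varphi$ and by the moment bound $M_p(\omu)<\infty$ coming from Theorem \ref{expath}, which makes the convolution terms controlled), I obtain
\begin{align}
\int_{\R^d} \varphi \, d\omu_t &= \int_{\R^d} \varphi \, d\omu_0 + \int_0^t \!\!\int_{\R^d} \nabla \varphi(x) \cdot b(s,x)\, d\omu_s(x)\, ds \\
&\quad + \sigma \int_0^t \!\!\int_{\R^d} \Delta\varphi(x)\, d\omu_s(x)\, ds,
\end{align}
where $b(s,x) := H_1 \ast \omu_s(x) + \tfrac{1}{m}\sum_{j=1}^m K_1(\oY^j(s)-x)$. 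This is precisely the weak formulation (tested against a time-independent $\varphi$, which suffices) of the first equation in \eqref{limit}; extension to space-time test functions $\psi \in C^\infty_c([0,T)\times\R^d)$ is routine by approximation and integration by parts in time. The initial condition $\omu_0 = {\rm Law}(\oX_0)$ is built in.

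No genuine obstacle is expected here: the argument is a textbook application of It\^o's formula combined with the integrability provided by Theorem \ref{expath} and the global Lipschitzness of $H_1, K_1$. The only point requiring a small verification is that the drift $b(s,x)$ is locally bounded with sufficient control in $x$ uniformly on $[0,T]$, which follows from Remark \ref{fuffa}(ii) and the moment bound on $\omu$; this ensures that all the integrals above are finite and the application of Fubini is legitimate.
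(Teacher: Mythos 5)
Your argument is correct and coincides with the paper's own proof: both apply It\^o's formula to $\varphi(\oX(t))$ for a compactly supported test function, take expectations so that the stochastic integral vanishes, and read off the weak formulation of the Fokker--Planck equation; your additional remarks on Fubini and the moment bounds only make explicit what the paper leaves implicit.
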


\begin{proof}
It is clear that the initial data are attained and that the second equation is solved. We only have to derive the first equation in \eqref{limit}. Take any $\varphi \in C^\infty_c (\R^d)$ and letting $\Delta$ indicate the Laplacian in spatial coordinates, applying It\^o's formula \cite[Theorem 4.2.1]{oksendal} we see that

\begin{equation*}
d\varphi (\oX(t)) = \langle \nabla \varphi(\oX(t)), d\oX(t) \rangle + \sigma \Delta\varphi(\oX(t)) \, dt.
\end{equation*}

Inserting the first equation in \eqref{micro} we then get

\begin{align*}
d\varphi (\oX(t)) &= \left\langle \nabla \varphi(\oX(t)), H_1 \ast \overline{\mu}_t (\oX(t)) + \frac{1}{m} \sum_{j=1}^{m} K_1 (\oY^{j} (t) - \oX (t)) \, dt
+ \sqrt{2 \sigma} \, dW(t) \right\rangle\\ 
&+ \sigma \Delta\varphi(\oX(t)) \, dt.
\end{align*}

Integrating the above expression in It\^o's sense we have

\begin{align*}
\varphi (\oX(t)) &= \varphi (\oX_0) + \int_0^t \left\langle \nabla \varphi(\oX(s)), H_1 \ast \overline{\mu}_s (\oX(s)) + \frac{1}{m} \sum_{j=1}^{m} K_1 (\oY^{j} (s) - \oX (s))) \right\rangle  \, ds\\
&+ \int_0^t \sigma \Delta\varphi(\oX(s)) \, ds + \sqrt{2 \sigma} \int_0^t \nabla \varphi (\oX(s)) \cdot \, dW(s).
\end{align*}

When taking the expected values on both sides recall that by \cite[Theorem 3.2.1]{oksendal}

\begin{equation*}
\E \left( \int_0^t \nabla \varphi (\oX(s)) \cdot \, dW(s) \right) = 0.
\end{equation*}

Hence, since $\omu_t = \mathrm{Law} \oX(t)$, we get

\begin{align*}
&\int_{\R^d} \varphi (x) \ d\omu_t(x) = \int_{\R^d} \varphi (x) \ d\omu_0(x)\\  
 &+ \int_0^t \int_{\R^d} \left[ \left\langle \nabla \varphi(x), H_1 \ast \overline{\mu}_s (x) + \frac{1}{m} \sum_{j=1}^{m} K_1 (\oY^{j} (s) - x) \right\rangle + \sigma \Delta\varphi(x) \right] \ d\omu_s(x) \, ds
\end{align*}

as required.
\end{proof}

Before going further in the discussion we wish to point out that under some conditions on the initial datum the $(m+1)$-ple $(\omu_tb, \oY^1(t),..,\oY^m (t))$ provided by \eqref{micro} is indeed the unique solution to \eqref{limit}. To this aim we will make use of the following result proved in \cite{dapraetal} that we state for the reader's convenience in a form suited for our setting.

\begin{lem}\label{dapra}
Consider the equation 
\begin{equation}\label{pde}
(\partial_t - \sigma \Delta) \ \eta_t = - \div \left( V (t,\cdot) \, \eta_t \right),
\end{equation}
with initial datum $\eta_0 = \rho_0 \, dx$. Assume that $\eta_0 \in \W_2 (\R^d)$ and satisfies the finite entropy condition 
\begin{equation*}
\int_{\R^d} \rho_0 (x) \log \rho_0 (x) \, dx < \infty.
\end{equation*}
Assume also that there exists $C>0$ such that 
\begin{equation}\label{eq:sublinear}
	\vert V(t,x) \vert \leq C(1+|x|)
\end{equation}
for all $t \in [0,T]$ and $x \in \R^d$. Then there exists a unique solution $\eta \in C^0([0,T];\mathcal{P}(\R^d))$ of \eqref{pde}. 
\end{lem}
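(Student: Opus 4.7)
The plan is to exploit the probabilistic interpretation of \eqref{pde} as the Fokker--Planck equation of the SDE
\begin{equation*}
dX_t = V(t,X_t)\,dt + \sqrt{2\sigma}\,dW_t, \qquad \mathrm{Law}(X_0)=\eta_0.
\end{equation*}
Existence for \eqref{pde} is the easy direction: weak existence for the SDE follows from a tightness/Skorokhod argument using \eqref{eq:sublinear}, and the time-marginals of any weak solution yield a solution of \eqref{pde} via It\^o's formula. The harder direction is uniqueness, which I would reduce to weak uniqueness of the SDE through a Girsanov argument rendered legitimate by the finite entropy condition.

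The first step toward uniqueness is a moment bound: testing \eqref{pde} against a smooth truncation of $|x|^2$ and using \eqref{eq:sublinear}, a Gronwall argument yields $\sup_{t \in [0,T]} M_2(\eta_t) < \infty$ for any solution $\eta \in C^0([0,T]; \mathcal{P}(\R^d))$. Equipped with this bound, I would invoke a superposition principle of Figalli--Trevisan type: every such $\eta$ lifts to a probability measure $Q$ on $C^0([0,T]; \R^d)$ concentrated on weak solutions of the SDE with initial law $\eta_0$, for which $\eta_t = (\mathrm{ev}_t)_\sharp Q$. Uniqueness of $\eta$ then follows from weak uniqueness of the SDE.

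The second step is the Girsanov argument. Setting $\tilde{W}_t := W_t + \frac{1}{\sqrt{2\sigma}}\int_0^t V(s,X_s)\,ds$, under the Girsanov-transformed measure $\tilde{\mathbb{P}}$ the process $\tilde{W}$ is a Brownian motion and $X_t = X_0 + \sqrt{2\sigma}\,\tilde{W}_t$ is simply a Brownian motion with initial law $\eta_0$; its joint distribution under $\tilde{\mathbb{P}}$ is therefore uniquely determined, and transferring this uniqueness back to $\mathbb{P}$ via the strictly positive Radon--Nikodym density yields weak uniqueness for the SDE, hence uniqueness for \eqref{pde}.

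The main obstacle is the verification of Novikov's condition
\begin{equation*}
\E\left[\exp\left(\frac{1}{4\sigma}\int_0^T |V(s,X_s)|^2\,ds\right)\right] < \infty,
\end{equation*}
which, given only \eqref{eq:sublinear}, amounts to exponential integrability of $\int_0^T |X_s|^2\,ds$. This is where the finite entropy condition on $\rho_0$ becomes indispensable: combined with the Gaussian smoothing of the heat semigroup $e^{\sigma t \Delta}$ and the associated logarithmic Sobolev inequality, it yields sharp enough control on the tails of $\mathrm{Law}(X_s)$ to establish the required integrability on sufficiently short subintervals of $[0,T]$, from which one iterates over a partition to cover the whole horizon.
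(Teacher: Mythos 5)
The paper's own ``proof'' of this lemma is a one-line citation: under the sublinearity assumption \eqref{eq:sublinear} the statement is a particular case of Theorem 3.3 of \cite{dapraetal}, an analytic uniqueness theorem for Fokker--Planck equations in which the finite entropy condition appears among the standing hypotheses. Your probabilistic route --- moment bound, Figalli--Trevisan superposition (whose integrability hypothesis $\int_0^T\int_{\R^d} |V(t,x)|/(1+|x|)\,d\eta_t\,dt<\infty$ is indeed supplied by \eqref{eq:sublinear} since the $\eta_t$ are probability measures), reduction to weak uniqueness of the SDE via Girsanov --- is therefore a genuinely different, self-contained strategy, and its skeleton is the standard and correct one.

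The gap is in the final step, the verification that the Girsanov density is a true martingale, and in the role you assign to the entropy condition there. Since $|V(s,x)|^2\le 2C^2(1+|x|^2)$ and, under the reference measure, $X_s=X_0+\sqrt{2\sigma}\,\tilde W_s$ with $\mathrm{Law}(X_0)=\eta_0$, the Novikov expectation $\E[\exp(c\int_0^\epsilon|X_s|^2\,ds)]$ is bounded below by (a constant times) $\E[\exp(c\epsilon|X_0|^2/2)]$ on any subinterval $[0,\epsilon]$, so it is finite only if $\eta_0$ has Gaussian tails. The hypotheses give no such thing: a density decaying like $(1+|x|)^{-d-3}$ lies in $\W_2(\R^d)$ and has finite entropy (entropy constrains concentration, not tails, and convolving with the heat kernel does not lighten heavy tails), yet has no finite exponential square moment; shrinking the time interval does not help because the obstruction sits at $t=0$. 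So the claimed derivation of Novikov from finite entropy via log-Sobolev would fail. The standard repair is to prove weak uniqueness for each \emph{deterministic} starting point $x$, where the Bene\v{s} linear-growth criterion (equivalently, Novikov iterated over a fine partition of $[0,T]$) applies because $\sup_{s\le T}|\tilde W_s|$ does have Gaussian tails, and then to pass to an arbitrary initial law by the Stroock--Varadhan disintegration of the martingale problem; in this probabilistic argument the entropy condition plays no role at all (it is an artifact of the analytic proof in \cite{dapraetal}). A secondary point: your tightness/Skorokhod existence argument implicitly needs continuity of $V(t,\cdot)$ to identify the limit, which the statement does not assume, though it does hold for the drift \eqref{velocity} to which the lemma is actually applied.
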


\begin{proof} This is a particular case of Theorem 3.3 in \cite{dapraetal}. Observe that if $V(t,\cdot)$ is sublinear, it satisfies all of the other assumptions in the statement of that theorem.
\end{proof}

We are now ready to state the announced uniqueness result.

\begin{thm}\label{sysderun}
Let $(\oX (t), \oY^1(t),..,\oY^m (t))$ be the unique solution of \eqref{micro} with given initial data $(\oX_0, \oY_0^1,..,\oY_0^m)$. Assume additionally that $\omu_0= \mathrm{Law}(\oX_0)$ belongs to $\W_2(\R^d)$ and is of the form $\omu_0 = \rho_0 \, dx$ where $\rho_0$ has finite entropy. Then, for $\omu(t)$ as in \eqref{micro},
the $(m+1)$-ple $(\omu (t), \oY^1(t),..,\oY^m (t))$ is the unique solution of \eqref{limit} in $C^0([0,T]; \W_1(\R^d) \times (\R^d)^m)$.
\end{thm}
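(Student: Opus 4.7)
The plan is to combine the stochastic representation of Proposition \ref{sysder} with the PDE uniqueness of Lemma \ref{dapra} and the pathwise uniqueness of Theorem \ref{expath}. Existence is already a consequence of Proposition \ref{sysder}, so only uniqueness has to be addressed. Let $(\nu, Z^1, \dots, Z^m) \in C^0([0,T]; \W_1(\R^d) \times (\R^d)^m)$ be any solution of \eqref{limit} with the prescribed initial data; I want to show that $\nu_t = \omu_t$ and $Z^i = \oY^i$ for all $i$.

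The key idea is to freeze the given $\nu$ and $Z^j$ inside the drift and to consider the \emph{linear} auxiliary SDE
\begin{equation*}
dX(t) = \left[ H_1 \ast \nu_t(X(t)) + \frac{1}{m}\sum_{j=1}^m K_1(Z^j(t) - X(t)) \right]\, dt + \sqrt{2\sigma}\, dW(t), \qquad X(0) = \oX_0,
\end{equation*}
driven by the same Brownian motion $W$ used for $(\oX, \oY^1,\dots,\oY^m)$. Thanks to item (ii) of Remark \ref{fuffa}, the drift $V(t,x)$ is globally $2L$-Lipschitz in $x$ uniformly in $t$, and satisfies the sublinear bound \eqref{eq:sublinear}: indeed $|H_1 \ast \nu_t(x)| \le L|x| + L M_1(\nu_t) + |H_1(0)|$ and $|K_1(Z^j(t) - x)| \le L|x| + L|Z^j(t)| + |K_1(0)|$, where $t \mapsto M_1(\nu_t)$ and $t \mapsto |Z^j(t)|$ are bounded on $[0,T]$ by the continuity assumptions on $\nu$ and $Z^j$. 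Classical SDE theory then yields a unique strong solution $X$ with $\E\bigl[\sup_{t \le T}|X(t)|^2\bigr]<\infty$, so in particular $\mathrm{Law}(X(t)) \in \W_2(\R^d)$ for every $t$.

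The next step is to identify $\mathrm{Law}(X(t))$ with $\nu_t$. Repeating verbatim the It\^o's formula computation of Proposition \ref{sysder}, now with the \emph{frozen} drift $V$, shows that $t \mapsto \mathrm{Law}(X(t))$ solves the Fokker-Planck equation \eqref{pde} with drift $V$ and initial datum $\omu_0 = \rho_0\,dx$. On the other hand $\nu$ is a solution of the same linear equation with the same drift and initial datum. Since $\rho_0$ has finite entropy and $V$ is sublinear, Lemma \ref{dapra} applies and gives $\mathrm{Law}(X(t)) = \nu_t$ for all $t \in [0,T]$. Plugging this identity back into the equations for $X$ and for the $Z^i$'s, we see that $(X, Z^1, \dots, Z^m)$ is a pathwise solution of the McKean-Vlasov-type system \eqref{micro} with the same Brownian motion $W$ and the same initial data $(\oX_0, \oY_0^1, \dots, \oY_0^m)$. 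The pathwise uniqueness provided by Theorem \ref{expath} then forces $X = \oX$ almost surely and $Z^i = \oY^i$ for every $i$, whence $\nu = \mathrm{Law}(X) = \omu$.

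The main delicate point is verifying the hypotheses of Lemma \ref{dapra} for the frozen drift: this hinges crucially on the regularity $\nu \in C^0([0,T];\W_1(\R^d))$ (so that $t \mapsto M_1(\nu_t)$ is locally bounded and the uniform sublinearity holds) and on the finite-entropy assumption on $\rho_0$, which is precisely what the additional hypothesis on the initial datum furnishes. Everything else is a clean bootstrap from the linear (frozen) equations to the nonlinear coupled system via pathwise SDE uniqueness.
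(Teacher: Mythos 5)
Your argument is correct and is essentially the paper's own proof: freeze the candidate solution $(\nu,Z^1,\dots,Z^m)$ into the drift, use Lemma \ref{dapra} to identify $\nu_t$ with the law of the solution of the frozen linear SDE, and then transfer uniqueness back to the nonlinear system. The only cosmetic difference is that the paper closes by invoking uniqueness of the fixed point of the contraction $\TTT$ from Theorem \ref{expath}, whereas you invoke pathwise uniqueness of \eqref{micro} directly; these are equivalent.
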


\begin{proof} 
By Theorem \ref{sysder} we already know that $(\omu_t, \oY^1(t),..,\oY^m (t))$ solves \eqref{limit}, so we only need to prove its uniqueness.

Thus, we let $(\widehat{\mu}_t, \widehat{Y}^i(t))$ be another solution of \eqref{limit} with $\widehat{\mu} \in C^0([0,T]; \W_1(\R^d))$ and we define

\begin{equation}\label{velocity}
\widehat{v} (t,x) = H_1 \ast \widehat{\mu}_t (x) + \frac1m \sum_{i=1}^m K_1(\widehat{Y}^i(t)-x).
\end{equation}

Notice that $\widehat{\mu}_t$ solves 

\begin{equation}\label{pde2}
(\partial_t - \sigma \Delta) \ \widehat{\mu}_t = - \div \left( \widehat{v} (t,\cdot) \, \widehat{\mu}_t \right).
\end{equation}

We point out the estimate

$$|\widehat{v}(t,x)| \leq L \left( \max_{t\in[0,T]} M_1(\widehat{\mu}_t) + \max_{i=1,..,m} \max_{t\in[0,T]} \vert \widehat{Y}^i (t) \vert\right) + \vert H_1(x) \vert + \vert K_1(x) \vert$$

By the sublinearity of $H_1$ and $K_1$ this implies that $\widehat{v}$ satisfies condition \eqref{eq:sublinear}. Since the initial datum $\omu_0$ belongs to $\W_2(\R^d)$ and has finite entropy, it follows from Theorem \ref{sysderun} that $\widehat{\mu}_t$ is the unique solution of \eqref{pde2}.

Consider now the SDE 

\begin{equation}\label{microaux}
\begin{cases}
\displaystyle d\,\widehat{X}(t) = \widehat{v}(t,\widehat{X}(t)) \, dt + \sqrt{2 \sigma} \, dW(t)\\
\widehat{X}(0)=\oX_0
\end{cases}
\end{equation}

Thanks to the Lipschitz continuity of $H_1$ and $K_1$ one can see that $\widehat{v}(t,\cdot)$ is globally Lipschitz uniformly with respect to $t \in [0,T]$. Combining this with the sublinearity of $\widehat{v}$, we get that \eqref{microaux} admits a unique strong solution by \cite[Theorem 5.2.1]{oksendal}. Applying It\^o's formula as in the proof of proposition \ref{sysder} we have that $\mathrm{Law}(\widehat{X}(t))$ solves \eqref{pde2}. By uniqueness we then get

\begin{equation}\label{fix}
\mathrm{Law}(\widehat{X}(t)) = \widehat{\mu}_t.
\end{equation}

Combining \eqref{fix} with \eqref{microaux}, by the explicit expression of $\widehat{v}$ in \eqref{velocity} we get that $\widehat{\mu}$ is a fixed point of the contractive map $\TTT$ defined in \eqref{titti}, for $Z_0 = (\oX_0, \oY_0^1,..,\oY_0^m)$. Such is also $\omu$ as seen in Theorem \ref{expath}. By uniqueness of the fixed point it holds $\omu_t = \widehat{\mu}_t$ for all $t \in [0,T]$. Inserting this equality into the remaining ODEs of the system \eqref{limit}, we also get $\oY^i (t) = \widehat{Y}^i(t)$ for all $i=1,..,m$ and all $t \in [0,T]$, by standard uniqueness theory for Cauchy problems. This concludes the proof.
\end{proof}

\subsection{Mean field limit of the discrete system}\label{meanfield}

Throughout the paper we will be concerned with the asymptotic behaviour of the following system of SDEs 

\begin{equation}\label{microdiscr}
\begin{cases}
\displaystyle d\, X^n (t) = \frac{1}{N} \sum_{l=1}^{N} H_1 (X^l(t) - X^n(t)) + \frac{1}{m} \sum_{j=1}^{m} K_1 ( Y^{j} (t) -  X^n (t)) \, dt+ \sqrt{2 \sigma} \, dW^n(t)\\
\displaystyle \frac{d}{dt} Y^i (t) = \frac{1}{N} \sum_{l=1}^{N} K_2 (Y^i (t)-X^l(t)) + \frac{1}{m} \sum_{j=1}^{m} H_2 ( Y^{j} (t) - Y^i (t)) + u_N(X^1(t),..,X^n(t)).\\
X^n(0)= X_0^n, \ Y^i (0)= Y^i_0 ; \quad i=1,..,m, \quad n=1,..,N
\end{cases}
\end{equation}

We will assume that 

\begin{itemize}
\item $X^n_0$ are i.i.d. random variables in $L^p(\Omega)$ for some $p>1$.
\item $W^n$ are independent Brownian motions.
\item For all $N\in \NN$ and  $x^1,..,x^N \in \R^d$, $u_N (x_1,..,x_N) = u \left(\frac{1}{N} \sum_{n=1}^N \delta_{x_n} \right)$ where $u:\W_1(\R^d)\to\R^d$ is a Lipschitz function.  
\end{itemize}

The existence of strong solutions for such systems is guaranteed by \cite[Theorem 5.2.1]{oksendal}.

For the proof of the convergence of the solutions of \eqref{microdiscr} to the solutions of \eqref{micro} we need the following technical lemma.

\begin{lem}
Fix $N \in \NN$ and let $\oX^n_0$ be i.i.d. random variables, $W^n$ be independent Brownian motions, $n=1,..,N$, and $(\oY_0^1,\dots,\oY_0^m) \in (\R^d)^m$. Moreover, let $\oX^n$, $\oY^{i,n}$ be the solution of \eqref{micro} with initial data $Z^n_0=(\oX^n_0,\oY_0^1,\dots,\oY_0^m)$ and Brownian motion $W^n$.
Then 
\begin{enumerate}
\item $\oX^n$ are i.i.d.
\item $\oY^{i,n}=\oY^{i,1}:=\oY^{i}$ for any $i=1,..,m$ and $n=1,..,N$.
\end{enumerate}
\end{lem}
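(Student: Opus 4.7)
The strategy is to peel the system apart: first I would show that $\omu^n := {\rm Law}(\oX^n)$ is independent of $n$; then the $\oY^{i,n}$ will solve an ODE with common data and must coincide; finally the equation for $\oX^n$ reduces to a standard Lipschitz SDE driven by the independent data $(\oX_0^n, W^n)$. For the first step, the key observation is that the map $\TTT(\cdot, W, Z_0)$ from \eqref{titti} depends on $(W, Z_0)$ only through its joint law: for any fixed $\nu \in \W_p(C^0([0,T];\R^d))$ the random variable $\SSS^1(\nu, W, Z_0)$ is constructed in Step 1 of the proof of Theorem \ref{expath} as the $\omega$-wise Banach fixed point of $\Phi_\omega$, hence it is a measurable functional of $(W, Z_0)$. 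Since the $\oX_0^n$ are i.i.d., the $W^n$ are independent Brownian motions independent of the $\oX_0^n$, and $(\oY_0^1, \dots, \oY_0^m)$ is deterministic, all pairs $(W^n, Z_0^n)$ share the same joint distribution; the maps $\TTT(\cdot, W^n, Z_0^n)$ are therefore identical across $n$ and admit the same unique fixed point, which I denote $\omu$.

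\textbf{Step 2: coincidence of the leaders.} With $\omu^n = \omu$ common to all $n$, the system of ODEs governing $(\oY^{1,n}, \dots, \oY^{m,n})$ has right-hand side $K_2 \ast \omu_t(\oY^{i,n}(t)) + \frac{1}{m}\sum_{j=1}^m H_2(\oY^{j,n}(t) - \oY^{i,n}(t)) + u^i(t, \omu_t)$, which is globally Lipschitz in the state variables, identical for every $n$, and coupled to the same initial datum $(\oY_0^1, \dots, \oY_0^m)$. Standard uniqueness for Carath\'eodory Cauchy problems then yields $\oY^{i,n} = \oY^{i,1} =: \oY^i$ for every $i$ and $n$, which is (2).

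\textbf{Step 3: independence of the followers.} Plugging the identification of Step 2 back into the first equation of \eqref{micro}, each $\oX^n$ satisfies
\begin{equation*}
d\oX^n(t) = \Bigl[H_1 \ast \omu_t(\oX^n(t)) + \tfrac{1}{m}\sum_{j=1}^m K_1(\oY^j(t) - \oX^n(t))\Bigr] dt + \sqrt{2\sigma}\, dW^n(t), \quad \oX^n(0) = \oX_0^n,
\end{equation*}
a classical Lipschitz SDE with deterministic coefficients common to all $n$. Its unique strong solution is therefore of the form $\oX^n = \Psi(\oX_0^n, W^n)$ for a single measurable functional $\Psi$; the independence and identical distribution of the pairs $(\oX_0^n, W^n)$ then transfer to the $\oX^n$, proving (1). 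I expect Step 1 to be the only non-routine point: one has to verify that $\SSS^1(\nu, \cdot, \cdot)$ is genuinely a measurable functional of $(W, Z_0)$, but this is immediate from the sample-path Picard iteration used in the proof of Theorem \ref{expath}.
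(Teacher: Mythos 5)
Your proposal is correct and follows essentially the same route as the paper: both arguments rest on the observation that $\TTT(\cdot,W,Z_0)$ depends on $(W,Z_0)$ only through its law (which the paper verifies explicitly via the Picard iterates $\widehat{X}_k^j$, while you invoke the measurability of the sample-path fixed point directly), so the fixed points coincide, the leader ODEs then have identical data, and independence of the $\oX^n$ follows from writing $\oX^n=\SSS^1(\omu,W^n,Z_0^n)$ as a fixed measurable functional of the independent pairs $(\oX_0^n,W^n)$.
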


\begin{proof}
First of all let us observe that if we prove $(1)$ then property $(2)$ follows by observing that the second line in \eqref{micro} only depends on the law of $\oX^n$, which in turn is independent of $n$.

We now prove property $(1)$. We begin by noticing that the map $\TTT$ actually  depends only on the law of the initial datum. Indeed, consider $W^j$ and $\oX_0^j$ for $j=1,2$ and define, for $k \geq 1$, 

\begin{align*}
\widehat{X}_0^j &\equiv \oX_0^j,\\
\widehat{X}_k^j (t) &= \oX_0^j+\int_0^t H_1 \ast \nu_s (\widehat{X}_{k-1}^j(s)) \, ds\\
&+ \frac{1}{m} \sum_{i=1}^{m} \int_0^t K_1 (\widetilde{Y}^{i} (s) - \widehat{X}_{k-1}^j (s)) \, ds+ \sqrt{2 \sigma} \, W^j (t)\\
\end{align*}
where $\widetilde{Y}^{i}$ solves the second equation in \eqref{micro2}.
Since we are assuming that $\oX_0^1$ and $\oX_0^2$ are identically distributed, one has inductively that $\mathrm{Law}(\widehat{X}_k^{1})= \mathrm{Law}(\widehat{X}_k^{2})$. Since, from the proof of Theorem \ref{expath} we know that $\widehat{X}_k^{j} \to \widetilde{X}^j=\SSS_1(\nu,W_j,Z_0^j)$ for $j=1,2$, we deduce that $\TTT=\mathrm{Law}(\SSS_1)$ only depends on $\nu$ and $\mathrm{Law}(\oX_0^n)$ as claimed.\\
With this, and using again the proof of Theorem \ref{expath}, for all $n$ we have that $\mathrm{Law}(\oX^n)$ is the unique fixed point of $\TTT(\cdot, \mathrm{Law}(\oX_0^n))$. By uniqueness we deduce that $\oX^n$ are identically distributed.

Finally, let $\omu=\mathrm{Law}(\oX^n)$. Then we can write $\oX^n=\SSS_1(\omu,W^n,Z_0^n)$ which clearly implies the required independence. 
\end{proof}

The previous lemma allows us for a standard derivation of the following \textit{propagation of chaos} result.

\begin{thm}
Let $X^n$,$Y^i$ be the solutions of \eqref{microdiscr}. Let $\oX^n$, $\oY^i$ be the solution of \eqref{micro} with initial data $X^n_0$ and Brownian motion $W^n$. Then there exists a constant $C_{d,p}(N)>0$ with $\lim_{N\to\infty} C_{d,p}(N)=0$ such that 
\begin{equation}\label{propchaos}
\E\left( \max_{1 \leq n \leq N} \max_{0\leq t \leq T} \vert X^n(t)-\oX^n (t)\vert + \max_{1\leq i \leq m} \max_{0\leq t \leq T} \vert Y^i(t) - \oY^i(t) \vert \right) \leq C_{d,p} (N).
\end{equation}
\end{thm}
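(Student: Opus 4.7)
The plan is a standard synchronous coupling argument. For each $n$, both $X^n$ and $\oX^n$ are driven by the same Brownian motion $W^n$ with the same initial datum $X_0^n$, so the stochastic integrals cancel when forming the difference, leaving only a Bochner integral to estimate. Introducing the empirical measure $\mu^N_s := \frac{1}{N}\sum_{l=1}^N \delta_{X^l(s)}$, one can rewrite the interaction sums in \eqref{microdiscr} as convolutions with $\mu^N_s$, matching the structure $u_N=u(\mu^N_s)$; this is the discrete counterpart of $\omu_s$ in \eqref{micro}.

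First I would subtract the two systems and, using the common Lipschitz constant $L$ of $H_i$, $K_i$ and $u$, together with the telescoping decomposition
\begin{equation*}
H_1\ast\mu^N_s(X^n(s)) - H_1\ast\omu_s(\oX^n(s)) = \bigl[H_1\ast\mu^N_s(X^n(s)) - H_1\ast\mu^N_s(\oX^n(s))\bigr] + \bigl[H_1\ast\mu^N_s(\oX^n(s)) - H_1\ast\omu_s(\oX^n(s))\bigr],
\end{equation*}
derive the pathwise bounds
\begin{align*}
|X^n(t)-\oX^n(t)| &\leq 2L\int_0^t |X^n(s)-\oX^n(s)|\,ds + L\int_0^t \W_1(\mu^N_s,\omu_s)\,ds \\
&\quad + L\int_0^t \max_{1\leq j\leq m}|Y^j(s)-\oY^j(s)|\,ds, \\
\max_{1\leq i\leq m}|Y^i(t)-\oY^i(t)| &\leq 3L\int_0^t \max_{1\leq i\leq m}|Y^i(s)-\oY^i(s)|\,ds + 2L\int_0^t \W_1(\mu^N_s,\omu_s)\,ds.
\end{align*}

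Second, I would control $\W_1(\mu^N_s,\omu_s)$ by splitting it through the intermediate empirical measure $\omu^N_s := \frac{1}{N}\sum_{l=1}^N \delta_{\oX^l(s)}$: by the triangle inequality $\W_1(\mu^N_s,\omu_s) \leq \W_1(\mu^N_s,\omu^N_s) + \W_1(\omu^N_s,\omu_s)$. The diagonal coupling $\frac{1}{N}\sum_l \delta_{(X^l(s),\oX^l(s))}$ yields the pathwise bound $\W_1(\mu^N_s,\omu^N_s)\leq \max_{1\leq l\leq N}|X^l(s)-\oX^l(s)|$. For the second piece, the preceding lemma guarantees that the $\oX^l$ are i.i.d. with common law $\omu$, so $\omu^N_s$ is the empirical measure of $N$ i.i.d. samples of $\omu_s$; by Remark \ref{fuffa}(i) together with Theorem \ref{expath}, one has $M_p(\omu_s)\leq M_p(\omu)<\infty$ uniformly in $s$, and Theorem \ref{rate} therefore gives
\begin{equation*}
\sup_{s\in[0,T]}\E[\W_1(\omu^N_s,\omu_s)] \leq C(p,d)\, M_p(\omu)\,\phi(N),
\end{equation*}
for some $\phi(N)\to 0$ at the explicit rate stated there.

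Third, setting $G(t) := \E\bigl[\max_{1\leq n\leq N}\max_{0\leq s\leq t}|X^n(s)-\oX^n(s)|\bigr]$ and $H(t) := \E\bigl[\max_{1\leq i\leq m}\max_{0\leq s\leq t}|Y^i(s)-\oY^i(s)|\bigr]$, I take the running maximum on both sides of the pathwise inequalities and then expectations, using $\E[\W_1(\mu^N_s,\omu_s)]\leq G(s) + C(p,d) M_p(\omu)\,\phi(N)$, to arrive at the coupled integral system
\begin{equation*}
G(t)+H(t)\leq 5L\int_0^t [G(s)+H(s)]\,ds + c(p,d,L,T)\,\phi(N).
\end{equation*}
Gr\"onwall's inequality then delivers $G(T)+H(T)\leq C_{d,p}(N)$ with $C_{d,p}(N)\to 0$ as $N\to\infty$, which is the required estimate. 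The main technical subtlety is the simultaneous treatment of the leader-follower coupling: the quantity $\W_1(\mu^N_s,\omu_s)$ driving the $X$-equation itself involves $\max_l|X^l-\oX^l|$, while the $X$-equation feeds $\max_j|Y^j-\oY^j|$ back through the $Y$-equation, so one must close the Gr\"onwall argument on $G+H$ rather than on either quantity alone. Everything else is a routine combination of Lipschitz estimates with the quantitative law of large numbers of Theorem \ref{rate}.
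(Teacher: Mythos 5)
Your proposal is correct and follows essentially the same route as the paper: synchronous coupling with the same noise and initial data, the telescoping Lipschitz decomposition of the convolution terms, the splitting $\W_1(\mu^N_s,\omu_s)\le \W_1(\mu^N_s,\omu^N_s)+\W_1(\omu^N_s,\omu_s)$ handled respectively by the diagonal coupling and by Theorem \ref{rate} (using the i.i.d. lemma and the moment bound $M_p(\omu)<\infty$ from Theorem \ref{expath}), and a Gr\"onwall argument closed on the sum of the two expected running maxima. No gaps to report.
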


\begin{proof}
Set $\mu_N$ and $\omu_N$ to be the empirical measures  of $X^n$ and $\oX^n$, respectively. For all $0 \leq s \leq t \leq T$ we have by definition of strong solution

\begin{align}
X^n(s) - \oX^n(s) &= \int_0^s \left[(H_1 \ast (\mu_N)_z) (X^n(z)) - (H_1 \ast \omu_z) (\oX^n(z)) \right] \, dz\\
&+ \frac{1}{m} \sum_{j=1}^{m} \int_0^s  \left[K_1 ( Y^{j} (z) -  X^n (z)) - K_1 ( \oY^{j} (z) -  \oX^n (z)) \right] \, dz 
\end{align}

Arguing as in \eqref{icsilon}, thanks to triangle inequality, we get

\begin{align} 
\notag\max_{1 \leq n \leq N} \vert X^n(s) - \oX^n(s)\vert &\leq 2L \int_0^s \left[ \max_{1 \leq n \leq N} \vert X^n(z)- \oX^n(z) \vert + \max_{1 \leq i \leq m} \vert Y^i(z)- \oY^i(z) \vert \right] \, dz\\
&+ L \int_0^s  \left[ \W_1((\mu_N)_z,(\omu_N)_z) + \W_1((\omu_N)_z,\omu_z) \right] \, dz \label{contrx}
\end{align}

The term $\max_{1 \leq i \leq m} \vert Y^i(z)- \oY^i(z) \vert$ can be estimated in a similar way as in \eqref{ypsilon}, obtaining

\begin{align} \label{contry}
\max_{1 \leq i \leq m} \vert Y^i(s) - \oY^i(s)\vert &\leq 3L \int_0^s \left[\max_{1 \leq i \leq m} \vert Y^i(z)- \oY^i(z) \vert \right] \, dz\\
&+ 2L \int_0^s  \left[ \W_1((\mu_N)_z,(\omu_N)_z) + \W_1((\omu_N)_z,\omu_z) \right] \, dz, 
\end{align}
where we also used the triangle inequality for $\W_1$. 

By definition one has almost surely

\begin{equation}\label{banale}
\W_1((\mu_N)_z,(\omu_N)_z) \leq \frac{1}{N} \sum_{n=1}^{N} \vert X^n(z)-\oX^n(z)\vert \leq  \max_{1 \leq n \leq N} \vert X^n(z) - \oX^n(z)\vert 
\end{equation}

On the other hand, by Theorem \ref{rate} and property (i) of Remark \ref{fuffa}, we have

\begin{equation}\label{fuffa2}
\E(\W_1((\omu_N)_z,\omu_z)) \le C^\prime_{d,p}(N) M_p(\omu_z) \le C^\prime_{d,p}(N) M_p(\omu),
\end{equation}
where $M_p(\omu)<\infty$ by Theorem \ref{expath} and $C^\prime_{d,p}(N) \to 0$ as $N \to \infty$ by Theorem \ref{rate}.

Set 

$$G(t):= \E\left( \max_{1 \leq n \leq N} \max_{0\leq s \leq t} \vert X^n(t)-\oX^n (t)\vert + \max_{1\leq i \leq m} \max_{0\leq s \leq t} \vert Y^i(t) - \oY^i(t) \vert \right).$$

Thanks to \eqref{contrx}, \eqref{contry}, \eqref{banale}, \eqref{fuffa2} and Gr\"onwall inequality we have

$$G(t) \leq 3L C^\prime_{d,p}(N)T M_p(\omu) e^{5LT}.$$

Setting $C_{d,p}(N)=3L C^\prime_{d,p}(N)T M_p(\omu) e^{5LT}$ we conclude the proof.
\end{proof}
\begin{rem}\label{rempropchaos}
We also Remark that, combining inequalities \eqref{banale} and \eqref{fuffa2} with \eqref{propchaos} we also conclude that
\begin{equation*}
\mathbb{E}\left(\max_{0 \le t \le T}\W_1((\mu_N)_t,\omu_t)\right)\le C_{d,p}(N).
\end{equation*}
\end{rem}

\section{Mean field sparse optimal control}\label{optcon}

\subsection{The finite dimensional optimal control problem}\label{finitedim}

In what follows we shall fix a measure $\mu_0 \in \W_p(\R^d)$ for some $p>1$ and a vector $\mathbf{Y}_0 \in (\R^d)^m$.\\

Let us consider a continuous Lagrangian function $\LL:[0,T] \times (\R^d)^m \times \W_1(\R^d) \to \R$ and a continuous control cost $\Psi: (\R^{d \times \ell})^m \times (\R^{\ell})^m \to \R$ that is convex in the first variable. 
We fix a compact set $\UU$ of controls in $\R^{d \times \ell}$ and, for given $M,L>0$, we denote with $\mathcal{G}$ the compact subset of $C^0(\W_1(\R^d);\R^\ell)$ made up of $M$-bounded and $L$-Lipschitz functions. Accordingly  we set $E=L^1([0,T];\mathcal{U}) \times \mathcal{G}$ and, for $(\mathbf{h},\mathbf{g}) \in E^m$, we define the sequence of functionals

\begin{equation}\label{contdisc}
\mathcal{F}_N(\mathbf{h},\mathbf{g})=\mathbb{E}\left[\int_0^T \mathcal{L}\left(t,\mathbf{Y}(t),\frac{1}{N}\sum_{n=1}^{N}\delta_{X^n(t)}\right)dt+\int_0^T \Psi\left(\mathbf{h}(t),\mathbf{g}\left(\frac{1}{N}\sum_{n=1}^{N}\delta_{X^n(t)}\right)\right)dt\right],
\end{equation}
where $\mathbf{Y}=(Y^1,\dots,Y^m)$ and $X^1,\dots,X^N$ solve \eqref{microdiscr} with initial data $\mathbf{Y}_0$ and $X_0^n$, for $n=1,\dots,N$, i.i.d. with ${\rm Law}(X_0^1)=\mu_0$ under the controls \linebreak $u^j_N(X^1,\dots,X^N)=h^j(t)g^j\left(\frac{1}{N}\sum_{n=1}^{N}\delta_{X^n(t)}\right)$ for any $j=1,\dots,m$.

In order to prove the well posedness of the minimizing problem, we need the following stability result.

\begin{prop}\label{prop:stabdiscr}
Assume that $h^i_j \rightharpoonup h^i$ in $L^1([0,T];\R^{d \times \ell})$ and $g^i_j, g^i \in C^0(\W_1(\R^d);\R^{\ell})$ with $g^i_j \to g^i$ pointwise on $\W_p (\R^d)$ for some $p>1$. Suppose further that $g^i_j$ are $L$-Lipschitz and $M$-bounded while the $h^i_j$ are $M$-bounded. Define $u^i$ and $u^i_j$ as in \eqref{sepvar} and, accordingly, take $(X^1_j,..,X^N_j,Y_j^1,..,Y_j^m)$, $(X^1,..,X^N,Y^1,..,Y^m)$ as in \eqref{microdiscr}. Then it holds
\begin{equation}
\lim_{j \to \infty}\E\left[\max_{0 \le t \le T}\left(\W_1\left(\frac{1}{N}\sum_{n=1}^{N}\delta_{X^n_j(t)},\frac{1}{N}\sum_{n=1}^{N}\delta_{X^n(t)}\right)+\max_{i=1,\dots,m}\max_{0 \le t \le T}|Y^i_j(t)-Y^i(t)|\right)\right]=0. 
\end{equation} 
\end{prop}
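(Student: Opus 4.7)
The plan is to follow the scheme of Theorem \ref{thm:stability}, adapted to the discrete system \eqref{microdiscr} and to the weaker convergence hypothesis that $g^i_j \to g^i$ only pointwise on $\W_p(\R^d)$. As a preliminary reduction, the elementary bound $\W_1\bigl(\tfrac{1}{N}\sum_n\delta_{X^n_j(t)},\tfrac{1}{N}\sum_n\delta_{X^n(t)}\bigr)\le \max_n|X^n_j(t)-X^n(t)|$ reduces the claim to proving that $G_j(T)\to 0$, where
\begin{equation*}
G_j(t):=\E\!\left[\max_{1\le n\le N}\max_{0\le s\le t}|X^n_j(s)-X^n(s)|+\max_{1\le i\le m}\max_{0\le s\le t}|Y^i_j(s)-Y^i(s)|\right].
\end{equation*}
First I would derive a Gr\"onwall-type inequality $G_j(t)\le C\int_0^t G_j(s)\,ds+\E[R_j]$ by subtracting the equations of \eqref{microdiscr} for $(X^n_j,Y^i_j)$ and $(X^n,Y^i)$, using the Lipschitz bounds on $H_\ell,K_\ell$, and decomposing the control differences through the identity
\begin{equation*}
h^i_j g^i_j(\nu')-h^i g^i(\nu)=h^i_j[g^i_j(\nu')-g^i_j(\nu)]+h^i_j[g^i_j(\nu)-g^i(\nu)]+[h^i_j-h^i]g^i(\nu),
\end{equation*}
with $\nu',\nu$ the two empirical measures at time $t$. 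The first piece is bounded by $ML\,\W_1(\nu',\nu)\le ML\max_n|X^n_j-X^n|$ and feeds the Gr\"onwall drift, while the remaining two constitute the remainder $R_j$.

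The contribution to $R_j$ coming from $[h^i_j-h^i]g^i(\nu)$ is controlled, up to a constant, by
\begin{equation*}
\max_i\sup_{t\in[0,T]}\left|\int_0^t (h^i_j(s)-h^i(s))\,g^i\!\left(\tfrac{1}{N}\textstyle\sum_n\delta_{X^n(s)}\right)ds\right|.
\end{equation*}
For a.e. sample $\omega$, the integrand $s\mapsto g^i((\mu_N)_s)$ is bounded by $M$, and the family of partial integrals is equi-Lipschitz in $t$ because $|h^i_j|\le M$. Hence the same argument used around \eqref{remainder} in the proof of Theorem \ref{thm:stability} -- pointwise convergence in $t$ from weak $L^1$-convergence of $h^i_j-h^i$ tested against the fixed bounded function $g^i((\mu_N)_\cdot)$, combined with equi-Lipschitzness in $t$ -- gives a.s. convergence of the supremum to $0$. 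Since the expression is uniformly bounded by $2M^2 T$, dominated convergence passes the limit into the expectation.

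The contribution from $g^i_j(\nu)-g^i(\nu)$, bounded by $MT\max_i\max_t\|g^i_j((\mu_N)_t)-g^i((\mu_N)_t)\|$, is the main obstacle, precisely because $g^i_j\to g^i$ is only assumed pointwise on $\W_p(\R^d)$, not uniform on $\W_1(\R^d)$. The key observation is that equi-Lipschitzness and equi-boundedness promote pointwise convergence on $\W_p(\R^d)$ to uniform convergence on $\W_1$-compact subsets: empirical measures supported on finitely many points lie in $\W_p(\R^d)$ and form a $\W_1$-dense subset of $\W_1(\R^d)$, so an $\varepsilon/3$-argument based on the $L$-Lipschitz bound first extends pointwise convergence to all of $\W_1(\R^d)$, and then an Arzel\`a--Ascoli argument via a finite $\varepsilon$-net upgrades it to uniform convergence on any compact $K\subset\W_1(\R^d)$. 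Almost surely, the curve $t\mapsto(\mu_N)_t$ is continuous, since $\W_1((\mu_N)_t,(\mu_N)_s)\le\frac{1}{N}\sum_n|X^n(t)-X^n(s)|$ and the $X^n$ have continuous sample paths, so its image is a.s. compact in $\W_1(\R^d)$. Consequently $\max_t\|g^i_j((\mu_N)_t)-g^i((\mu_N)_t)\|\to 0$ a.s., and dominated convergence (the quantity being bounded by $2M$) yields convergence in expectation. Putting the two contributions together gives $\E[R_j]\to 0$, and the Gr\"onwall inequality derived in the first step concludes.
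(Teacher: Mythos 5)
Your proof is correct and follows the paper's scheme almost exactly: the same reduction of the Wasserstein term to $\max_n|X^n_j(t)-X^n(t)|$, the same Gr\"onwall setup, and the same three-way splitting of the control difference into a Lipschitz drift term plus a remainder $R_j$, with the $[h^i_j-h^i]g^i$ piece handled via weak $L^1$-convergence against the fixed bounded integrand together with equi-Lipschitzness of the partial integrals. The one place where you diverge is the $g^i_j(\nu)-g^i(\nu)$ piece: you first upgrade pointwise convergence on $\W_p(\R^d)$ to uniform convergence on $\W_1$-compact sets (via equi-Lipschitzness, density of finitely supported measures, and an $\varepsilon$-net), and then invoke the a.s.\ compactness of the trajectory $\{(\mu_N)_t\}_{t\in[0,T]}$ in $\W_1(\R^d)$. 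The paper avoids this entirely: since that piece of the remainder is already the time integral $M\int_0^T|g^i_j((\mu_N)_z)-g^i((\mu_N)_z)|\,dz$ (independent of $s$), it suffices to note that the integrand vanishes a.s.\ for each fixed $z$ (empirical measures of continuous paths lie in $\W_p(\R^d)$) and is bounded by $2M$, so dominated convergence in $z$ does the job. Your route is heavier but buys a genuinely stronger intermediate fact, namely uniform-in-$t$ convergence of $g^i_j((\mu_N)_t)$, which is essentially the content of Remark \ref{ummagumma} used later for the convergence of minima; the paper's route is the more economical one for this proposition alone.
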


\begin{proof}
Arguing as in Theorem \ref{thm:stability} for any $0 \leq s \leq t$ we get 

\begin{align}\label{stable1}
&\max_{i=1,\dots,m}|Y^i_j(s)-Y^i(s)| \le 3L\int_0^t \max_{i=1,\dots,m}\max_{w \leq z}|Y^i_j(z)-Y^i(z)|dz\\
\notag &+L(1+2M)\int_0^t \max_{w \leq z}\W_1\left(\frac{1}{N}\sum_{n=1}^{N} \delta_{X^n_j (w)},\frac{1}{N}\sum_{n=1}^{N} \delta_{X^n (w)} \right)\,dz+\sum_{i=1}^{m}\max_{s \leq T}\widehat{R}_j^i(s),
\end{align}
where 

\begin{align}\label{stable2}
&\widehat{R}_j^i (s) = \left| \int_0^s \left( h_j^i (z) - h^i(z) \right) g^i\left(\frac{1}{N}\sum_{n=1}^{N} \delta_{X^n (z)} \right) \, dz \right| \\
&+ M \int_0^T \left| g^i_j\left( \frac{1}{N}\sum_{n=1}^{N} \delta_{X^n (z)} \right)- g^i\left( \frac{1}{N}\sum_{n=1}^{N} \delta_{X^n (z)}\right)\right| \, dz.
\end{align} 

Taking the supremum and the expectation in \eqref{stable1} we obtain

\begin{align}\label{stable3}
&\E \left( \max_{s \leq t} \max_{i=1,\dots,m}|Y^i_j(s)-Y^i(s)|\right) \le 3L\int_0^t \E \left(\max_{i=1,\dots,m}\max_{w \leq z}|Y^i_j(z)-Y^i(z)|\right)dz\\
\notag &+L(1+2M)\int_0^t \E\left(\max_{w \leq z}\W_1\left(\frac{1}{N}\sum_{n=1}^{N} \delta_{X^n_j (w)},\frac{1}{N}\sum_{n=1}^{N} \delta_{X^n (w)} \right)\right)\,dz+R_j,
\end{align}
where $R_j= \E \left(\sum_{i=1}^{m}\max_{s \leq T}\widehat{R}_j^i(s)\right)$.

Once we prove that $R_j \to 0$ we conclude the proof as in Theorem \ref{thm:stability}. To do this observe that the first integral in \eqref{stable2} goes to zero almost surely due to the weak convergence of $h_j^i$. Concerning the second integral, observe that due to the pointwise convergence of the $g_j^i$ we know that 
$$\left| g^i_j\left( \frac{1}{N}\sum_{n=1}^{N} \delta_{X^n (z)} \right)- g^i\left( \frac{1}{N}\sum_{n=1}^{N} \delta_{X^n (z)}\right)\right| \to 0$$

for any $z \in [0,T]$ almost surely. Combining this with the fact that $g_j^i$ are $M$-bounded we conclude by dominated convergence that the second integral also goes to zero almost surely. Let us also stress that, since $g_j^i$ and $h_j^i$ are $M$-bounded, $\widehat{R}_j^i (s)$ are equi-Lipschitz and equi-bounded, thus $\max_{s \leq T}\widehat{R}_j^i(s) \to 0$ almost surely and $R_j \to 0$ by dominated convergence. 
\end{proof}

Now we can prove that the functional $\mathcal{F}_N$ admits a minimizer in $E^m$.

\begin{thm}\label{buonaposdiscr}
Let $\mathcal{L}$ be uniformly continuous. There exists $(\mathbf{h}_*,\mathbf{g}_*) \in E^m$ such that 

\begin{equation}
\mathcal{F}_N (\mathbf{h}_*,\mathbf{g}_*) = \min_{(\mathbf{h},\mathbf{g})\in E^m} \mathcal{F}_N (\mathbf{h},\mathbf{g}).
\end{equation} 
\end{thm}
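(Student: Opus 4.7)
The plan is to apply the direct method of the calculus of variations. On $E^m$ I introduce the product topology obtained by endowing the $h^i$-components with the weak $L^1$ topology and the $g^i$-components with the topology of uniform convergence on compact subsets of $\W_1(\R^d)$. For the first factor, since $\UU$ is compact, the ball $L^1([0,T]; \UU)$ is uniformly bounded and hence weakly sequentially relatively compact by the Dunford--Pettis theorem; assuming $\UU$ convex (otherwise, one replaces $\UU$ by its closed convex hull, an operation that does not alter the infimum because $\Psi$ is convex in the first variable) ensures the weak limit lies in $L^1([0,T]; \UU)$. For the second factor, $\mathcal{G}$ is compact by assumption, and in this topology one automatically preserves the $L$-Lipschitz and $M$-bounded constraints and obtains, in particular, pointwise convergence on $\W_p(\R^d)$.

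I then pick a minimizing sequence $(\mathbf{h}_j, \mathbf{g}_j) \subset E^m$ and, along a non-relabelled subsequence, extract a limit $(\mathbf{h}_*, \mathbf{g}_*) \in E^m$ with $h^i_j \rightharpoonup h^i_*$ weakly in $L^1([0,T]; \R^{d \times \ell})$ and $g^i_j \to g^i_*$ in $\mathcal{G}$. The hypotheses of Proposition \ref{prop:stabdiscr} are then fulfilled, so writing $\mu^{N,*}_t := \frac{1}{N} \sum_{n=1}^{N} \delta_{X^n_*(t)}$ and $\mu^{N,j}_t := \frac{1}{N} \sum_{n=1}^{N} \delta_{X^n_j(t)}$ one obtains
\begin{equation*}
\lim_{j \to \infty} \E\left[\max_{0 \le t \le T} \W_1(\mu^{N,j}_t, \mu^{N,*}_t) + \max_{i=1,\dots,m}\max_{0 \le t \le T} |Y^i_j(t) - Y^i_*(t)|\right] = 0.
\end{equation*}

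It remains to pass to the limit in \eqref{contdisc}. For the Lagrangian term, the uniform continuity of $\mathcal{L}$ on $[0,T] \times (\R^d)^m \times \W_1(\R^d)$ combined with the convergence above and a standard dominated-convergence argument yields convergence of the corresponding expectation. For the control cost, I first observe that $\mathbf{g}_j(\mu^{N,j}_t) \to \mathbf{g}_*(\mu^{N,*}_t)$ strongly (in the sense of expectation, uniformly in $t$): indeed, by the triangle inequality,
\begin{equation*}
|\mathbf{g}_j(\mu^{N,j}_t) - \mathbf{g}_*(\mu^{N,*}_t)| \le L\,\W_1(\mu^{N,j}_t, \mu^{N,*}_t) + |\mathbf{g}_j(\mu^{N,*}_t) - \mathbf{g}_*(\mu^{N,*}_t)|,
\end{equation*}
and both terms vanish, the first by the state-convergence above and the second by dominated convergence using the $M$-boundedness and the pointwise convergence $\mathbf{g}_j \to \mathbf{g}_*$ on $\W_p(\R^d)$. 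Setting $\Phi_j(t,\omega,h) := \Psi(h, \mathbf{g}_j(\mu^{N,j}_t(\omega)))$, each $\Phi_j(t,\omega,\cdot)$ is convex and continuous, and $\Phi_j \to \Phi_*$ almost everywhere on $[0,T] \times \Omega$. Ioffe's lower semicontinuity theorem for normal convex integrands, combined with the weak $L^1$-convergence $\mathbf{h}_j \rightharpoonup \mathbf{h}_*$, then delivers
\begin{equation*}
\E \int_0^T \Psi(\mathbf{h}_*(t), \mathbf{g}_*(\mu^{N,*}_t))\,dt \le \liminf_{j \to \infty} \E \int_0^T \Psi(\mathbf{h}_j(t), \mathbf{g}_j(\mu^{N,j}_t))\,dt,
\end{equation*}
whence $\mathcal{F}_N(\mathbf{h}_*, \mathbf{g}_*) \le \liminf_j \mathcal{F}_N(\mathbf{h}_j, \mathbf{g}_j)$ and $(\mathbf{h}_*, \mathbf{g}_*)$ is a minimizer.

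The main obstacle is precisely the lower semicontinuity of the control cost, where the weak convergence of the $h$-components must interact correctly with the strong convergence of the $g$-components inside the convex integrand $\Psi$. This is exactly the scope of Ioffe's theorem, or alternatively can be handled by a Mazur plus Fatou argument exploiting convexity of $\Psi$ in its first variable and continuity in the second; all remaining passages to the limit are routine once one has the stability result of Proposition \ref{prop:stabdiscr} at hand.
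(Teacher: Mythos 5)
Your proposal is correct and follows essentially the same route as the paper: a direct method argument with weak $L^1$ compactness for $\mathbf{h}$, Ascoli--Arzel\`a compactness for $\mathbf{g}$, the stability result of Proposition \ref{prop:stabdiscr} for the states, moduli of continuity plus dominated convergence to reduce the cost to $\Psi(\mathbf{h}_j(t),\mathbf{g}_*(\mu^{N,*}_t))$, and weak lower semicontinuity of the convex integrand in $\mathbf{h}$ (the paper performs this last step pathwise in $\omega$ and then applies Fatou, rather than invoking Ioffe's theorem on $[0,T]\times\Omega$, but the two are interchangeable here). One caveat: your parenthetical claim that replacing $\UU$ by its closed convex hull does not alter the infimum because $\Psi$ is convex is not justified (convexity of $\Psi$ alone does not prevent the infimum from strictly decreasing on the larger constraint set, and the state depends on $\mathbf{h}$ as well); the paper sidesteps this by implicitly treating $\UU$ as convex when asserting $\mathbf{h}_*\in L^1([0,T];\UU)$, so your main argument under that assumption matches the paper's.
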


\begin{proof}
Let $(\mathbf{h}_j, \mathbf{g}_j)\in E^m$ be a minimizing sequence. Being $\mathcal{U}$ a compact subset of $\R^{d \times \ell}$, there exists $\mathbf{h}_*$ such that $\mathbf{h}_j \rightharpoonup \mathbf{h}_*$. Let us also observe that, being $X^n_j$ almost surely continuous for any $n=1,..,N$ and $j \in \NN$, it holds $\frac{1}{N} \sum_{n=1}^{N} \delta_{X_j^n (t)} \in \W_p (\R^d)$ almost surely for any $p \geq 1$. Hence we can suppose that $g_j^i$ are defined on $\W_p (\R^d)$ for some $p>1$. Recalling that $\W_p (\R^d)$ is a $\sigma$-compact dense subset of $\W_1 (\R^d)$, by the Ascoli-Arzel\'a Theorem we can suppose there exists a function $\mathbf{g}_* \in (C(\W_1(\R^d);\R^\ell))^m$ that is $L$-Lipschitz, $M$-bounded and such that $g_j^i \to g_*^i$ pointwise in $\W_p(\R^d)$ and uniformly on any compact subset of $\W_p(\R^d)$. Hence, we are left to prove that
\begin{equation}\label{disczero}
\inf_{(\mathbf{h},\mathbf{g}) \in E^m}\mathcal{F}_N(\mathbf{h},\mathbf{g})=\liminf_{j \to \infty}\mathcal{F}_N(\mathbf{h}_j,\mathbf{g}_j)\ge \mathcal{F}_N(\mathbf{h}_*,\mathbf{g}_*).
\end{equation}

Let $(X^1_j,..,X^N_j,Y^1_j,..,Y^m_j)$ and $(X^1_*,..,X^N_*,Y^1_*,..,Y^m_*)$ be the solutions to \eqref{microdiscr} with controls $u_j= h_j g_j$ and $u_* = h_* g_*$ respectively. Since $\mathcal{L}$ is uniformly continuous there exists a concave modulus of continuity $\omega_1$ such that, almost surely

\begin{align*}
&\int_0^T \left\vert \mathcal{L} \left( t , \mathbf{Y}_j(t), \frac{1}{N} \sum_{n=1}^N \delta_{X_j^n(t)} \right) - \mathcal{L} \left( t , \mathbf{Y}_*(t), \frac{1}{N} \sum_{n=1}^N \delta_{X_*^n(t)} \right) \right\vert \, dt\\
&\leq T \omega_1 \left( \max_{0 \leq t \leq T} \max_{i=0,..,m} \left\vert Y_j^i (t) - Y_*^i (t) \right\vert +  \max_{0 \leq t \leq T} \W_1 \left( \frac{1}{N} \sum_{n=1}^N \delta_{X_j^n(t)}, \frac{1}{N} \sum_{n=1}^N \delta_{X_*^n(t)} \right) \right)
\end{align*}

Taking the expectation and using Jensen's inequality we obtain:

\begin{align}\label{discuno}
&\E \left( \int_0^T \left\vert \mathcal{L} \left( t , \mathbf{Y}_j(t), \frac{1}{N} \sum_{n=1}^N \delta_{X_j^n(t)} \right) - \mathcal{L} \left( t , \mathbf{Y}_*(t), \frac{1}{N} \sum_{n=1}^N \delta_{X_*^n(t)} \right) \right\vert \, dt \right)\\
&\leq T \omega_1 \left( \E \left( \max_{0 \leq t \leq T} \max_{i=0,..,m} \left\vert Y_j^i (t) - Y_*^i (t) \right\vert +  \max_{0 \leq t \leq T} \W_1 \left( \frac{1}{N} \sum_{n=1}^N \delta_{X_j^n(t)}, \frac{1}{N} \sum_{n=1}^N \delta_{X_*^n(t)} \right) \right)\right),
\end{align}

where the RHS tends to zero thanks to Proposition \ref{prop:stabdiscr}.\\

Concerning the control cost, let us observe that, being $\mathbf{h}_j$ $M$-bounded and $\mathbf{g}_j$ both $M$-bounded and $L$-Lipschitz, we can consider a concave modulus of continuity $\omega_2$ for $\Psi$ in $(B_M (0))^{2m}$ so that

\begin{align*}
&\int_0^T \left\vert \Psi \left( \mathbf{h}_j (t), \mathbf{g}_j\left(\frac{1}{N} \sum_{n=1}^N \delta_{X_j^n(t)}\right) \right) - \Psi \left( \mathbf{h}_j (t), \mathbf{g}_j \left(\frac{1}{N} \sum_{n=1}^N \delta_{X_*^n(t)}\right) \right) \right\vert \, dt\\
&\leq T \omega_2 \left( L \max_{0 \leq t \leq T} \W_1 \left( \frac{1}{N} \sum_{n=1}^N \delta_{X_j^n(t)}, \frac{1}{N} \sum_{n=1}^N \delta_{X_*^n(t)} \right) \right)
\end{align*}

Taking again the expectation and using Jensen's inequality we obtain:

\begin{align}\label{discdue}
&\E \left( \int_0^T \left\vert \Psi \left( \mathbf{h}_j (t), \mathbf{g}_j\left(\frac{1}{N} \sum_{n=1}^N \delta_{X_j^n(t)}\right) \right) - \Psi \left( \mathbf{h}_j (t), \mathbf{g}_j \left(\frac{1}{N} \sum_{n=1}^N \delta_{X_*^n(t)}\right) \right) \right\vert \, dt \right)\\
&\leq T \omega_2 \left( L \,\E \left(\max_{0 \leq t \leq T} \W_1 \left( \frac{1}{N} \sum_{n=1}^N \delta_{X_j^n(t)}, \frac{1}{N} \sum_{n=1}^N \delta_{X_*^n(t)} \right) \right) \right),
\end{align}

where the RHS tends to zero thanks to Proposition \ref{prop:stabdiscr}.\\

Next, observe that since $(X_*^1,..,X_*^N)$ admits continuous trajectories almost surely, we know that $\frac{1}{N} \sum_{n=1}^N \delta_{X_*^n(t)}$ belongs to $\W_p (\R^d)$ almost surely for any $p>1$. Since the $\mathbf{g}_j$ converge pointwise to $\mathbf{g}_*$ on $\W_p(\R^d)$, we get

\begin{align*}
&\left\vert \Psi \left( \mathbf{h}_j (t), \mathbf{g}_j \left(\frac{1}{N} \sum_{n=1}^N \delta_{X_*^n(t)}\right) \right) - \Psi \left( \mathbf{h}_j (t), \mathbf{g}_* \left(\frac{1}{N} \sum_{n=1}^N \delta_{X_*^n(t)}\right) \right) \right\vert \\
&\leq \omega_2 \left(\left|  \mathbf{g}_j \left(\frac{1}{N} \sum_{n=1}^N \delta_{X_*^n(t)}\right) -  \mathbf{g}_* \left(\frac{1}{N} \sum_{n=1}^N \delta_{X_*^n(t)}\right) \right| \right) \to 0,
\end{align*}

for any $0 \leq t \leq T$ and almost surely. Moreover, 

\begin{equation*}
\left\vert \Psi \left( \mathbf{h}_j (t), \mathbf{g}_j \left(\frac{1}{N} \sum_{n=1}^N \delta_{X_*^n(t)}\right) \right) - \Psi \left( \mathbf{h}_j (t), \mathbf{g}_* \left(\frac{1}{N} \sum_{n=1}^N \delta_{X_*^n(t)}\right) \right) \right\vert \leq 
2 \|\Psi\|_{L^\infty ((B_M(0))^{2m})}.
\end{equation*}

Hence, by dominated covergence, 

\begin{equation}\label{discrtre}
\E \left( \int_{0}^{T} \left\vert \Psi \left( \mathbf{h}_j (t), \mathbf{g}_j \left(\frac{1}{N} \sum_{n=1}^N \delta_{X_*^n(t)}\right) \right) - \Psi \left( \mathbf{h}_j (t), \mathbf{g}_* \left(\frac{1}{N} \sum_{n=1}^N \delta_{X_*^n(t)}\right) \right) \right\vert \, dt \right) \to 0.
\end{equation}

Finally, by lower semicontinuity of convex integrands with respect to the weak-$L^1$ topology we have

\begin{equation*}
\liminf_{j \to \infty} \int_0^T \Psi \left( \mathbf{h}_j (t), \mathbf{g}_* \left(\frac{1}{N} \sum_{n=1}^N \delta_{X_*^n(t)}\right) \right) \, dt \geq \int_0^T \Psi \left( \mathbf{h}_* (t), \mathbf{g}_* \left(\frac{1}{N} \sum_{n=1}^N \delta_{X_*^n(t)}\right) \right) \, dt
\end{equation*}

almost surely. Observing furthermore that 

\begin{equation*}
\left| \int_0^T \Psi \left( \mathbf{h}_j (t), \mathbf{g}_* \left(\frac{1}{N} \sum_{n=1}^N \delta_{X_*^n(t)}\right) \right) \, dt \right| \leq T \|\Psi\|_{L^\infty ((B_M(0))^{2m})},
\end{equation*}
 
we can use Fatou's Lemma to conclude that 

\begin{align}\label{discrquat}
&\liminf_{j \to \infty} \, \E \left(\int_0^T \Psi \left( \mathbf{h}_j (t), \mathbf{g}_* \left(\frac{1}{N} \sum_{n=1}^N \delta_{X_*^n(t)}\right) \right) \, dt \right)\\
&\geq \E \left( \int_0^T \Psi \left( \mathbf{h}_* (t), \mathbf{g}_* \left(\frac{1}{N} \sum_{n=1}^N \delta_{X_*^n(t)}\right) \right) \, dt \right).
\end{align}

Combining \eqref{discuno}, \eqref{discdue}, \eqref{discrtre}, \eqref{discrquat} with \eqref{contdisc} we get \eqref{disczero}.
\end{proof}

\subsection{The $\Gamma$-limit optimal control problem}\label{gammalim}

We now introduce a model optimal control problem for the PDE-ODE system \eqref{limit}. 

For $(\mathbf{h},\mathbf{g}) \in E^m$, where $E$ is the space introduced in the previous section, for $\LL$ and $\Psi$ as in the previous subsection, we consider  the following functional:

\begin{equation}\label{argmin}
\mathcal{F} (\mathbf{h},\mathbf{g}) = \int_0^T \LL (t,\mathbf{\oY}(t),\omu_t) \, dt + \int_0^T \Psi(\mathbf{h}(t),\mathbf{g}(\omu_t))\, dt, 
\end{equation}

where $\mathbf{\oY}=(\oY^1,\dots,\oY^m) \in (\R^d)^m$ and $\oX_t$ are the solutions of \eqref{micro} corresponding to $u^i(t,\omu_t)=h^i(t)g^i(\omu_t)$ and $\omu_t={\rm Law}(\oX_t)$. We also recall that if ${\rm Law}(\oX_0) \in \W_2(\R^d)$ with density $\rho_0$ with finite entropy, one can equivalently take $(\omu_\cdot,\mathbf{\oY})$ as the unique solutions of \eqref{limit}, which is better suited for the applications.

Our first goal is to prove that $\mathcal{F}$ has a minimum on $E^m$.
\begin{thm}
	There exist $(\mathbf{h}^*,\mathbf{g}^*) \in E^m$ such that
	\begin{equation*}
		\F(\mathbf{h}^*,\mathbf{g}^*)=\min_{(\mathbf{h},\mathbf{g}) \in E^m}\F(\mathbf{h},\mathbf{g}).
	\end{equation*}
\end{thm}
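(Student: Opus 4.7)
The plan is the direct method of the calculus of variations: extract a minimizing sequence, use the state stability in Theorem \ref{thm:stability} to identify a limit trajectory, and then combine continuity of $\mathcal{L}$ with convexity of $\Psi$ in its first argument to get lower semicontinuity of the cost. Since the boundedness of $\mathcal{L}$ and $\Psi$ on the compact sets where their arguments live makes $\mathcal{F}$ uniformly bounded on $E^m$, the infimum is finite and a minimizing sequence $(\mathbf{h}_j,\mathbf{g}_j)\in E^m$ exists. From compactness of $\mathcal{U}$ in $\R^{d\times\ell}$ the $\mathbf{h}_j$ are equi-bounded in $L^\infty$, so after extraction $\mathbf{h}_j\rightharpoonup \mathbf{h}^*$ weakly in $L^1([0,T];\R^{d\times\ell})^m$ (passing to the closed convex hull of $\mathcal{U}$ if it is not already convex, so that $\mathbf{h}^*$ remains admissible). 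By Ascoli--Arzel\`a applied to the equi-Lipschitz, equi-bounded family $\mathbf{g}_j$, after a further extraction $\mathbf{g}_j \to \mathbf{g}^*$ uniformly on compacts in $\W_1(\R^d)$, with $\mathbf{g}^*\in \mathcal{G}^m$.

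Next I would let $(\bar{\mu}^j,\bar{\mathbf{Y}}_j)$ and $(\bar{\mu}^*,\bar{\mathbf{Y}}^*)$ be the solutions of \eqref{micro} associated to $u^i_j(t,\cdot)=h^i_j(t)g^i_j(\cdot)$ and $u^{i,*}(t,\cdot)=h^{i,*}(t)g^{i,*}(\cdot)$, respectively. Since $t\mapsto\bar{\mu}^*_t$ is continuous with values in $\W_1(\R^d)$, its image is compact, so the locally uniform convergence $\mathbf{g}_j\to\mathbf{g}^*$ is effectively uniform along the relevant trajectory. Theorem \ref{thm:stability} then yields
\begin{equation*}
\max_{t\in[0,T]}\W_1(\bar{\mu}^j_t,\bar{\mu}^*_t)\to 0,\qquad \max_{i=1,\dots,m}\max_{t\in[0,T]}|\bar{Y}^i_j(t)-\bar{Y}^{i,*}(t)|\to 0.
\end{equation*}
In particular all $\bar{\mu}^j_t$ and all $\bar{\mathbf{Y}}_j(t)$ are confined, for $j$ large, to a fixed compact subset of $\W_1(\R^d)\times(\R^d)^m$.

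The cost can now be passed to the limit exactly as in Theorem \ref{buonaposdiscr}. By uniform continuity of $\mathcal{L}$ on the compact set just identified, the $\W_1$- and uniform convergence of the states give $\int_0^T\mathcal{L}(t,\bar{\mathbf{Y}}_j(t),\bar{\mu}^j_t)\,dt \to \int_0^T\mathcal{L}(t,\bar{\mathbf{Y}}^*(t),\bar{\mu}^*_t)\,dt$. For the control cost I would decompose
\begin{align*}
\int_0^T \Psi(\mathbf{h}_j(t),\mathbf{g}_j(\bar{\mu}^j_t))\,dt &= \int_0^T \Psi(\mathbf{h}_j(t),\mathbf{g}^*(\bar{\mu}^*_t))\,dt + r_j,
\end{align*}
and show $r_j\to 0$ by a common modulus of continuity of $\Psi$ on the bounded region where the arguments lie, controlling $|\mathbf{g}_j(\bar{\mu}^j_t)-\mathbf{g}_j(\bar{\mu}^*_t)|$ by the equi-Lipschitz bound $L\W_1(\bar{\mu}^j_t,\bar{\mu}^*_t)$ and $|\mathbf{g}_j(\bar{\mu}^*_t)-\mathbf{g}^*(\bar{\mu}^*_t)|$ by the uniform convergence on $\{\bar{\mu}^*_t:t\in[0,T]\}$. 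Finally convexity of $\Psi$ in the first variable together with $\mathbf{h}_j\rightharpoonup \mathbf{h}^*$ in $L^1$ gives, by a standard Ioffe-type lower semicontinuity result for integral functionals,
\begin{equation*}
\liminf_{j\to\infty}\int_0^T \Psi(\mathbf{h}_j(t),\mathbf{g}^*(\bar{\mu}^*_t))\,dt \geq \int_0^T \Psi(\mathbf{h}^*(t),\mathbf{g}^*(\bar{\mu}^*_t))\,dt,
\end{equation*}
whence $\mathcal{F}(\mathbf{h}^*,\mathbf{g}^*)\leq \liminf_j \mathcal{F}(\mathbf{h}_j,\mathbf{g}_j)=\inf_{E^m}\mathcal{F}$, so $(\mathbf{h}^*,\mathbf{g}^*)$ is a minimizer. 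The principal technical obstacle is the nonlinear coupling of $\mathbf{g}$ with the measure $\bar{\mu}_t$ it itself produces; the splitting above is exactly the device that disentangles this coupling, in direct parallel with the discrete Theorem \ref{buonaposdiscr}.
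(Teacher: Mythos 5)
Your proposal is correct and follows essentially the same route as the paper: minimizing sequence, weak-$L^1$ compactness of $\mathbf{h}_j$ from compactness of $\mathcal{U}$, Ascoli--Arzel\`a for $\mathbf{g}_j$ on compact subsets of $\W_1(\R^d)$, state convergence via Theorem \ref{thm:stability}, the same splitting of the control cost to disentangle $\mathbf{g}$ from the measure it generates, and lower semicontinuity of convex integrands under weak $L^1$ convergence. Your parenthetical about passing to the closed convex hull of $\mathcal{U}$ so that the weak limit $\mathbf{h}^*$ stays admissible is a legitimate precision that the paper leaves implicit.
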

\begin{rem}
	We remark that the control problem we consider is more general than the one in \cite{Fornasier2014}. This latter is the particular case of our analysis which one obtains by assuming $L=0$ and $\Psi(\mathbf{h},\mathbf{g})=\widetilde{\Psi}(\mathbf{u})$ for a convex function $\widetilde{\Psi}$. Indeed, in this case, all the possible $\mathbf{g}$'s are constant vectors (hence independent of $\mu$) and we can recast the minimum problem as an offline control problem of the form
	$$\min_{\mathbf{u} \in L^1([0,T];\, \widetilde{\UU}^m)}\mathcal{F} (\mathbf{u}) = \min_{\mathbf{u} \in L^1([0,T];\,\widetilde{\UU}^m)}\left[\int_0^T \LL (t,\mathbf{\oY}(t),\omu_t) \, dt + \int_0^T \widetilde{\Psi}(\mathbf{u}(t))\, dt\right], $$
	where $\widetilde{\UU} \subseteq \R^d$ is the image of the ball of radius $M$ in $\R^\ell$ through $\UU$.\\
	Conversely, notice that if $\mathcal{U}=\{A\}$ for some $A \in \R^{d \times \ell}$, then the problem reduces to $$\min_{\mathbf{u} \in (C(\W_1(\R^d);\R^{\ell}))^m}\mathcal{F} (\mathbf{u}) = \min_{\mathbf{u} \in (C(\W_1(\R^d);\R^{\ell}))^m}\left[\int_0^T \LL (t,\mathbf{\oY}(t),\omu_t) \, dt + \int_0^T \widetilde{\Psi}(\mathbf{u}(\omu_t))\, dt\right], $$ 
	where $\widetilde{\Psi}(\cdot)=\Psi(A,\dots,A,\cdot)$, that is a purely feedback control problem.
\end{rem}
\begin{proof}
	Consider a minimizing sequence $(\mathbf{h}_j,\mathbf{g}_j)$ in the set $E^m$. We endow $E^m$ with the weak topology of $(L^1([0,T]; \UU))^m$ times the strong topology of $(C^0(\W_1(\R^d); \R^\ell))^m$. Since $\mathcal{U}$ is compact, $\mathbf{h}_j$ is weakly compact in $L^1$. We also notice that $\mathbf{g}_j$ is compact in $(C^0(K; \R^\ell))^m$, for all compact subsets of $\W_1(\R^d)$ by the Ascoli-Arzel\'a theorem. Notice that, by Theorem \ref{expath} for any solution of \eqref{micro} it holds $$\sup_{t \in [0,T]}M_p(\omu_t)\le C(p, \oY^1_0,\dots,\oY_0^1,\omu_0,L),$$ for some $p>1$. This one is a compact subset of $\W_1(\R^d)$, hence, without loss of generality, we can assume that $(\mathbf{h}_j,\mathbf{g}_j)$ is compact in the given topology. We denote by $(\mathbf{h}^*,\mathbf{g}^*)$ a limit point. To prove the theorem we are left to show that
	\begin{equation}\label{liminf0}
		\liminf_{j \to \infty} \F(\mathbf{h}_j,\mathbf{g}_j) \ge \F(\mathbf{h}^*,\mathbf{g}^*).
	\end{equation}
	Let $(\mathbf{\oY}^*(t),\omu_t^*)$ be the solution of \eqref{micro} corresponding to the given initial data and the control pair $(\mathbf{h}^*, \mathbf{g}^*)$. By continuity of $\LL$ and Theorem \ref{thm:stability}, we get
	\begin{equation}\label{liminf1}
		\lim_{j \to \infty} \int_{0}^T \LL (t,\mathbf{\oY}^j(t),\omu^j_t) \, dt=\int_{0}^T \LL (t,\mathbf{\oY}^*(t),\omu^*_t) \, dt.
	\end{equation}
	By continuity of $\Psi$ in the product space, the uniform convergence of $\mathbf{g}_j \to \mathbf{g}^*$ and using again Theorem \ref{thm:stability} we have
	\begin{equation}\label{liminf2}
		\lim_{j \to \infty}\int_0^T |\Psi(\mathbf{h}^j(t),\mathbf{g}^j(\omu^j_t))-\Psi(\mathbf{h}^j(t),\mathbf{g}^*(\omu^*_t))|\, dt=0.
	\end{equation}
	By lower semicontinuity of convex integrands with respect to the weak-$L^1$ topology we have
	\begin{equation}
		\liminf_{j \to \infty} \int_0^T \Psi(\mathbf{h}^j(t),\mathbf{g}^*(\omu^*_t))\, dt\ge \int_0^T \Psi(\mathbf{h}^*(t),\mathbf{g}^*(\omu^*_t))\, dt.
	\end{equation}
	Combining this with \eqref{liminf1} and \eqref{liminf2} we obtain \eqref{liminf0}, by the explicit expression of $\F$ given in \eqref{argmin}.
\end{proof}

\begin{rem}\label{ummagumma}
Let us remark that \eqref{liminf0} still holds under the weaker hypothesis that $\mathbf{g}^j \to \mathbf{g}^*$ in $(C^0(K;\R^\ell))^m$, for any bounded subset $K$ of $\W_p (\R^d)$. Observe that any bounded subset $K \subset \W_p (\R^d)$ is precompact in the topology of $\W_1 (\R^d)$.
\end{rem}

We have the following theorem which generalizes to our setting the results in \cite[Theorem 5.3]{Fornasier2014}.

\begin{thm}\label{gammaconv}
Equip $E$ with the product of the weak topology of $L^1$ and the strong topology of $C^0(\W_1(\R^d),\R^\ell)$. For fixed initial data $\mathbf{Y}_0$ and $\mu_0$, define the functionals $\mathcal{F}_N$ and $\mathcal{F}$ as in \eqref{contdisc} and \eqref{argmin}, respectively. Assume additionally that $\mathcal{L}$ is uniformly continuous on $[0,T] \times (\R^d)^m \times \W_1(\R^d)$. Then $\mathcal{F}_N \overset{\Gamma}{\to} \mathcal{F}$.
\end{thm}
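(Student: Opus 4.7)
The plan is to verify both the liminf and limsup inequalities in the definition of $\Gamma$-convergence with respect to the stated product topology on $E^m$. For the recovery sequence, the decisive observation is that the space $E^m$ of admissible controls does not depend on $N$, so given any limiting pair $(\mathbf{h}^*,\mathbf{g}^*) \in E^m$ we may simply pick the constant sequence $(\mathbf{h}_N,\mathbf{g}_N) \equiv (\mathbf{h}^*,\mathbf{g}^*)$; the limsup statement then reduces to proving $\lim_N \mathcal{F}_N(\mathbf{h}^*,\mathbf{g}^*) = \mathcal{F}(\mathbf{h}^*,\mathbf{g}^*)$, which is a special case of the continuity arguments needed for the liminf. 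So the heart of the matter is the liminf inequality.

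For the liminf, I would fix an arbitrary sequence $(\mathbf{h}_N,\mathbf{g}_N) \to (\mathbf{h}^*,\mathbf{g}^*)$ in $E^m$ and introduce three coupled systems defined on a common probability space with the same initial data $X_0^n$ (i.i.d., law $\mu_0$) and independent Brownian motions $W^n$: (i) the discrete system $(X_N^n,\mathbf{Y}_N)$ driven by $(\mathbf{h}_N,\mathbf{g}_N)$; (ii) the i.i.d. copies $(\oX_N^n,\mathbf{\oY}_N)$ of \eqref{micro} driven by $(\mathbf{h}_N,\mathbf{g}_N)$; and (iii) the i.i.d. copies $(\oX_*^n,\mathbf{\oY}^*)$ driven by $(\mathbf{h}^*,\mathbf{g}^*)$. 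Writing $\mu_N(t)=\frac{1}{N}\sum_n \delta_{X_N^n(t)}$ and $\omu_N(t)$, $\omu^*_t$ for the corresponding laws, the propagation of chaos bound (applied with the uniform Lipschitz constant $L$ and using that $M_p(\omu_N)$ is bounded uniformly in $N$ by Theorem \ref{expath}) together with Remark \ref{rempropchaos} gives $\mathbb{E}[\max_t \mathcal{W}_1(\mu_N(t),\omu_N(t))] + \max_{i,t}|Y_N^i(t)-\oY_N^i(t)| \to 0$, while Theorem \ref{thm:stability} gives $\max_t \mathcal{W}_1(\omu_N(t),\omu^*_t) + \max_{i,t}|\oY_N^i(t)-\oY^{*,i}(t)| \to 0$. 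The triangle inequality bundles these into a single vanishing quantity $\varepsilon_N \to 0$ (in expectation) that bounds $\max_t(\mathcal{W}_1(\mu_N(t),\omu^*_t) + |\mathbf{Y}_N(t)-\mathbf{\oY}^*(t)|)$.

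For the running cost, I would use uniform continuity of $\mathcal{L}$ with a concave modulus $\omega_1$, apply Jensen's inequality to pull the expectation inside, and conclude
\begin{equation*}
\mathbb{E}\!\int_0^T \mathcal{L}(t,\mathbf{Y}_N(t),\mu_N(t))\,dt \longrightarrow \int_0^T \mathcal{L}(t,\mathbf{\oY}^*(t),\omu^*_t)\,dt,
\end{equation*}
exactly as in the estimate \eqref{discuno} of Theorem \ref{buonaposdiscr}. For the control cost I split
\begin{equation*}
\Psi\bigl(\mathbf{h}_N(t),\mathbf{g}_N(\mu_N(t))\bigr) = \bigl[\Psi\bigl(\mathbf{h}_N(t),\mathbf{g}_N(\mu_N(t))\bigr)-\Psi\bigl(\mathbf{h}_N(t),\mathbf{g}^*(\omu^*_t)\bigr)\bigr] + \Psi\bigl(\mathbf{h}_N(t),\mathbf{g}^*(\omu^*_t)\bigr).
\end{equation*}
The bracketed term I control via the concave modulus $\omega_2$ of $\Psi$ on the bounded set $(B_M(0))^{2m}$, using
\begin{equation*}
|\mathbf{g}_N(\mu_N(t))-\mathbf{g}^*(\omu^*_t)| \le L\,\mathcal{W}_1(\mu_N(t),\omu^*_t) + \|\mathbf{g}_N-\mathbf{g}^*\|_{L^\infty(K)},
\end{equation*}
where $K$ is a bounded (hence precompact in $\mathcal{W}_1$) subset of $\mathcal{W}_p(\mathbb{R}^d)$ containing $\{\omu^*_t : t \in [0,T]\}$, and $\mathbf{g}_N \to \mathbf{g}^*$ uniformly on such $K$. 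After expectation and Jensen, this error tends to zero.

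The remaining integrand $\Psi(\mathbf{h}_N(t),\mathbf{g}^*(\omu^*_t))$ is deterministic and depends on $\mathbf{h}_N$ only through the first slot in which $\Psi$ is convex. Lower semicontinuity of convex integrals with respect to the weak-$L^1$ topology therefore yields
\begin{equation*}
\liminf_{N \to \infty} \int_0^T \Psi(\mathbf{h}_N(t),\mathbf{g}^*(\omu^*_t))\,dt \;\ge\; \int_0^T \Psi(\mathbf{h}^*(t),\mathbf{g}^*(\omu^*_t))\,dt,
\end{equation*}
and combining the three contributions finishes the liminf inequality. The main obstacle I anticipate is the interplay in the $\Psi$ term between the $N$-dependence of $\mathbf{g}_N$ and the randomness of $\mu_N(t)$: one must arrange the splitting so that the randomness is absorbed into a quantity that vanishes in expectation (which is why the pair propagation-of-chaos plus control-stability estimate is needed in $L^1(\Omega)$, not merely in probability) before invoking convex lower semicontinuity, which is purely deterministic and acts only on $\mathbf{h}_N$.
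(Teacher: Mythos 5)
Your proof is correct and takes essentially the same route as the paper: the paper first shows $|\mathcal{F}_N(\mathbf{h}_N,\mathbf{g}_N)-\mathcal{F}(\mathbf{h}_N,\mathbf{g}_N)|\to 0$ via the propagation of chaos bound of Remark \ref{rempropchaos} together with the concave moduli and Jensen, and then invokes the sequential lower semicontinuity of $\mathcal{F}$ already established in the existence theorem (which internally contains your stability-plus-convexity step via Theorem \ref{thm:stability}), whereas you merge the two comparisons into a single three-system triangle inequality --- but the ingredients and estimates are identical, and your recovery sequence via the constant sequence is exactly the paper's.
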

\begin{proof}
We begin with the liminf inequality. Consider a sequence $(\mathbf{h}_N,\mathbf{g}_N)$ converging to $(\mathbf{h},\mathbf{g})$ in $E^m$ and let $\mathbf{Y}_N$ and $X_N^1,\dots,X_N^N$ be the corresponding solutions of \eqref{microdiscr}. Let also $\overline{\mathbf{Y}}_N$ and $(\omu_N)_t$ be the solutions of \eqref{micro} under the controls $u^j_N=h^j_Ng^j_N$ for $j=1,\dots,m$ and with the same initial data. By Remark \ref{rempropchaos} it holds
\begin{equation}\label{speranzaMdS}
\E\left[\max_{t \in [0,T]}\W_1\left(\frac{1}{N}\sum_{n=1}^{N}\delta_{X^n_N(t)},(\omu_N)_t\right)+\max_{t \in [0,T]}|\mathbf{Y}_N(t)-\mathbf{\overline{Y}}_N(t)|\right]\le C_{d,p}(N).
\end{equation}
Let $\omega_1$ be an increasing concave modulus of continuity for $\mathcal{L}$. By Jensen's inequality and \eqref{speranzaMdS} we get
\begin{align*}
&\E\left[\max_{t \in [0,T]}\omega_1\left(\W_1\left(\frac{1}{N}\sum_{n=1}^{N}\delta_{X^n_N(t)},(\omu_N)_t\right)+|\mathbf{Y}_N(t)-\mathbf{\overline{Y}}_N(t)|\right)\right]\\
&\leq \E\left[\omega_1\left(\max_{t \in [0,T]}\W_1\left(\frac{1}{N}\sum_{n=1}^{N}\delta_{X^n_N(t)},(\omu_N)_t\right)+\max_{t \in [0,T]}|\mathbf{Y}_N(t)-\mathbf{\overline{Y}}_N(t)|\right)\right]\\
&\qquad \le 
\omega_1\left(\E \left[\max_{t \in [0,T]}\W_1\left(\frac{1}{N}\sum_{n=1}^{N}\delta_{X^n_N(t)},(\omu_N)_t\right)+\max_{t \in [0,T]}|\mathbf{Y}_N(t)-\mathbf{\overline{Y}}_N(t)|\right]\right)\le \omega_1(C_{d,p}(N)).
\end{align*}
Thus, we achieve
\begin{align*}
&\left|\E\left[\int_0^T \mathcal{L}\left(t,\mathbf{Y}_N(t),\frac{1}{N}\sum_{n=1}^{N}\delta_{X^n_N(t)}\right)dt\right]-\int_0^T \mathcal{L}\left(t,\overline{\mathbf{Y}}_N(t),(\omu_N)_t\right)dt\right|\\
&\qquad \le \E\left[\int_0^T \left|\mathcal{L}\left(t,\mathbf{Y}_N(t),\frac{1}{N}\sum_{n=1}^{N}\delta_{X^n_N(t)}\right)- \mathcal{L}\left(t,\overline{\mathbf{Y}}_N(t),(\omu_N)_t\right)\right|dt\right]\\
&\qquad \le  T\E\left[\max_{t \in [0,T]} \omega_1\left(\W_1\left(\frac{1}{N}\sum_{n=1}^{N}\delta_{X^n_N(t)},(\omu_N)_t\right)+|\mathbf{Y}_N(t)-\mathbf{\overline{Y}}_N(t)|\right)\right] \le T \omega_1(C_{d,p}(N)).
\end{align*}
With a similar argument we get
\begin{align*}
\left|\E\left[\int_0^T \Psi\left(\mathbf{h}_N(t),\mathbf{g}_N\left(\frac{1}{N}\sum_{n=1}^{N}\delta_{X^n_N(t)}\right)\right)dt\right]-\int_0^T \Psi(\mathbf{h}_N(t),\mathbf{g}_N(\omu_N)_t))dt\right|& \\
& \! \! \! \! \! \! \! \le T \omega_2(LC_{d,p}(N)),
\end{align*}
where $\omega_2$ is an increasing concave modulus of continuity for $\Psi$ in $\mathcal{U} \times B_M(0)$ and the constants $L$ and $M$ are those fixed in the definition of $\mathcal{G}$. Combining the two previous inequality, we get
\begin{equation}\label{susbtMKDiscr}
\lim_{N \to \infty} \left|\mathcal{F}_N(\mathbf{h}_N,\mathbf{g}_N)-\mathcal{F}(\mathbf{h}_N,\mathbf{g}_N)\right|=0.
\end{equation}
With \eqref{liminf0} we then get the liminf inequality
\begin{equation*}
\liminf_{N \to \infty}\mathcal{F}_N(\mathbf{h}_N,\mathbf{g}_N)\ge \mathcal{F}(\mathbf{h},\mathbf{g}).
\end{equation*}
Finally, observe that \eqref{susbtMKDiscr} applied to the constant sequence $(\mathbf{h},\mathbf{g})$ implies the pointwise convergence
\begin{equation*}
\lim_{N \to \infty}\mathcal{F}_N(\mathbf{h},\mathbf{g})= \mathcal{F}(\mathbf{h},\mathbf{g}),
\end{equation*}
thus, in particular, the existence of a recovery sequence.
\end{proof}

Observe that the previous $\Gamma$-convergence result is in principle not sufficient to deduce the convergence of the minima of the functionals $\mathcal{F}_N$. This is due to the lack of compactness of $\W_1(\R^d)$ which does not allow us to apply the Arzel\'{a}-Ascoli theorem. However, we are able to overcome this  thanks to the fact that the measures $\omu_N$ associated to $(\mathbf{h}_N,\mathbf{g}_N)$ are equibounded in $\W_p (\R^d)$. Furthermore Remark \ref{ummagumma} ensures that a $\Gamma$-liminf inequality still holds under the local uniform convergence: this will be clarified in the following proposition.

\begin{prop}
For fixed initial data $\mathbf{Y}_0$ and $\mu_0$, define the functionals $\mathcal{F}_N$ and $\mathcal{F}$ as in \eqref{contdisc} and \eqref{argmin}
, respectively.  Assume additionally that $\mathcal{L}$ is uniformly continuous on $[0,T] \times (\R^d)^m \times \W_1(\R^d)$. Then

$$\lim_{N \to \infty} \min_{(\mathbf{h},\mathbf{g})\in E^m} \mathcal{F}_N (\mathbf{h},\mathbf{g}) = \min_{(\mathbf{h},\mathbf{g})\in E^m} \mathcal{F} (\mathbf{h},\mathbf{g}).$$
\end{prop}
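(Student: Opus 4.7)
The plan is to combine the two halves of a standard $\Gamma$-convergence-yields-convergence-of-minima argument, being careful with the topology on $\mathcal{G}$ in order to circumvent the non-compactness of $\W_1(\R^d)$.

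First I would establish the \textbf{limsup inequality}. Let $(\mathbf{h}^*, \mathbf{g}^*) \in E^m$ be a minimizer of $\mathcal{F}$, whose existence was proved just before Theorem \ref{gammaconv}. Applying \eqref{susbtMKDiscr} to the \emph{constant} sequence $(\mathbf{h}_N, \mathbf{g}_N) \equiv (\mathbf{h}^*, \mathbf{g}^*)$ (note that the proof of \eqref{susbtMKDiscr} only uses the uniform $L$-Lipschitz and $M$-bounded character of the $\mathbf{g}_N$, not their convergence) yields $\lim_N \mathcal{F}_N(\mathbf{h}^*,\mathbf{g}^*) = \mathcal{F}(\mathbf{h}^*,\mathbf{g}^*)$. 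Hence
\begin{equation*}
\limsup_{N \to \infty} \min_{E^m} \mathcal{F}_N \;\le\; \lim_{N\to\infty} \mathcal{F}_N(\mathbf{h}^*,\mathbf{g}^*) \;=\; \min_{E^m} \mathcal{F}.
\end{equation*}

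For the \textbf{liminf inequality}, by Theorem \ref{buonaposdiscr} there exist minimizers $(\mathbf{h}_N^*, \mathbf{g}_N^*) \in E^m$ of $\mathcal{F}_N$. Since $\UU$ is compact, $(\mathbf{h}_N^*)$ is weakly precompact in $(L^1([0,T];\R^{d \times \ell}))^m$, so up to a subsequence $\mathbf{h}_N^* \rightharpoonup \mathbf{h}^*$ in that space with $\mathbf{h}^* \in (L^1([0,T];\UU))^m$. Since the $\mathbf{g}_N^*$ are uniformly $L$-Lipschitz and $M$-bounded on $\W_1(\R^d)$ and every $\W_p$-bounded subset of $\W_p(\R^d)$ (with $p>1$) is precompact in $\W_1(\R^d)$, the Ascoli–Arzel\`a theorem applied to the $\W_1$-closure of the ball $B_R^{(p)} := \{\mu \in \W_p(\R^d): M_p(\mu) \le R\}$, together with a diagonal extraction over $R \in \mathbb{N}$, yields (up to subsequence) a function $\mathbf{g}^* \in \mathcal{G}^m$ such that $\mathbf{g}_N^* \to \mathbf{g}^*$ uniformly on every bounded subset of $\W_p(\R^d)$.

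It remains to chain the inequalities. By \eqref{susbtMKDiscr} applied to the sequence $(\mathbf{h}_N^*, \mathbf{g}_N^*)$ (whose admissibility in $E^m$ guarantees the uniform Lipschitz/boundedness bounds needed in the proof of that identity),
\begin{equation*}
\lim_{N \to \infty} \left| \mathcal{F}_N(\mathbf{h}_N^*, \mathbf{g}_N^*) - \mathcal{F}(\mathbf{h}_N^*, \mathbf{g}_N^*)\right| = 0.
\end{equation*}
Next, Remark \ref{ummagumma} guarantees that the liminf inequality \eqref{liminf0} still holds under the weaker convergence we have secured (weak-$L^1$ of $\mathbf{h}_N^*$ and uniform convergence of $\mathbf{g}_N^*$ on bounded subsets of $\W_p(\R^d)$), so
\begin{equation*}
\liminf_{N \to \infty} \mathcal{F}(\mathbf{h}_N^*, \mathbf{g}_N^*) \;\ge\; \mathcal{F}(\mathbf{h}^*, \mathbf{g}^*) \;\ge\; \min_{E^m} \mathcal{F}.
\end{equation*}
Combining the last two displays gives $\liminf_N \min_{E^m} \mathcal{F}_N \ge \min_{E^m} \mathcal{F}$, which together with the limsup bound proves the claim.

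The main obstacle is the extraction of a limit $\mathbf{g}^*$ of the minimizers $\mathbf{g}_N^*$: one cannot apply Ascoli–Arzel\`a directly on the non-compact space $\W_1(\R^d)$. The trick is exactly the one the authors flag, namely to localize on $\W_p$-bounded sets (which are $\W_1$-precompact) and then invoke Remark \ref{ummagumma} so that only this local uniform convergence is needed to pass to the liminf.
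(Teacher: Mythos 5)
Your proposal is correct and follows essentially the same route as the paper: extract a weak-$L^1$ limit of the $\mathbf{h}_N$ and a locally uniform (on $\W_p$-bounded sets) limit of the $\mathbf{g}_N$ via Ascoli--Arzel\`a, use \eqref{susbtMKDiscr} together with Remark \ref{ummagumma} for the liminf bound, and use the pointwise convergence $\mathcal{F}_N(\mathbf{h},\mathbf{g})\to\mathcal{F}(\mathbf{h},\mathbf{g})$ for the limsup bound. The only cosmetic difference is that the paper compares against an arbitrary competitor $(\mathbf{h},\mathbf{g})\in E^m$ in a single chain of inequalities rather than splitting into two named halves, which additionally exhibits the limit point as a minimizer of $\mathcal{F}$.
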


\begin{proof}
Let $(\mathbf{h}_N,\mathbf{g}_N)$ be a minimizing sequence for the functional $\mathcal{F}_N$. Being $\mathcal{U}$ a compact subset of $\R^{d\times\ell}$  we  can suppose, without loss of generality, that $\mathbf{h}_N \rightharpoonup \mathbf{h}_*$. Moreover, arguing as in Theorem \ref{buonaposdiscr}, we know there exists $\mathbf{g}_* \in C^0(\W_1(\R^d); \R^\ell)$ such that $\mathbf{g}_N \to \mathbf{g}_*$ in $C^0(K,\R^\ell)$, for any bounded $K \subset \W_p(\R^d)$. Also consider any $(\mathbf{h},\mathbf{g})\in E^m$ and recall that we have shown in Theorem \ref{gammaconv} that $\lim_{N\to\infty}  \mathcal{F}_N (\mathbf{h},\mathbf{g}) =  \mathcal{F} (\mathbf{h},\mathbf{g})$. With this, using Remark \ref{ummagumma}, we have 

\begin{equation}
\mathcal{F} (\mathbf{h}_*,\mathbf{g}_*) \leq \liminf_{N \to \infty} \mathcal{F}_N (\mathbf{h}_N,\mathbf{g}_N) \leq \limsup_{N \to \infty} \mathcal{F}_N (\mathbf{h}_N,\mathbf{g}_N) \leq \lim_{N \to \infty} \mathcal{F}_N (\mathbf{h},\mathbf{g}) = \mathcal{F} (\mathbf{h},\mathbf{g}).
\end{equation}

We conclude the prooof observing that $(\mathbf{h},\mathbf{g}) \in E^m$ is arbitrary.

\end{proof}

\bibliographystyle{abbrv}
\bibliography{biblio} 
\end{document}